\definecolor{shade}{gray}{0.8}
        {
          \raggedright
        \setlength{\rightmargin}{\leftmargin}
        \setlength{\itemsep}{-12pt}
        \setlength{\parsep}{20pt}
        \begin{lrbox}{\@tempboxa}%
        \begin{minipage}{\linewidth-2\fboxsep}
        }%
        {
        \end{minipage}%
        \end{lrbox}%
        \fcolorbox{black}{shade}{\usebox{\@tempboxa}}\newline
        }%
\newtheorem{theorem}{Theorem}
\newtheorem{lemma}{Lemma}
\newtheorem{cor}{Corollary}
\newcommand{\iu}{\mathrm{i}} 
\renewcommand{\eqref}[1]{\hyperref[#1]{(\ref*{#1})}}
\newcommand{\dd}{\mathrm{d}}
\newcommand*{\pref}[1]{\hyperref[#1]{(\ref*{#1})}}
\newcommand*{\refpref}[2]{\hyperref[#2]{\ref*{#1}(\ref*{#2})}}
\newcommand{\R}{\mathbb{R}}
\title{Oscillatory attraction and repulsion from   a subset of the unit sphere or  hyperplane for isotropic stable L\'evy processes}
\author{ Mateusz Kwa\'{s}niki\thanks{Department of Pure Mathematics
Wrocław University of Science and Technology
ul. Wybrzeże Wyspiańskiego 27
50-370 Wrocław, Poland. Emaik: \texttt{Mateusz.Kwasnicki@pwr.edu.pl}},
\ Andreas E. Kyprianou\thanks{Department of Mathematical Sciences, University of Bath, Claverton Down, Bath, BA2 7AY, UK. Email: \texttt{a.kyprianou@bath.ac.uk} and \texttt{t.saizmaa@bath.ac.uk}}, 
\\ Sandra Palau\thanks{Department of Statistics and Probability, Instituto de Investigaciones en Matem\'aticas Aplicadas y en Sistemas, Universidad Nacional Aut\'onoma de M\'exico, M\'exico.
E-mail: \texttt{sandra@sigma.iimas.unam.mx}}
\
and 
Tsogzolmaa Saizmaa$^{\dagger,}$\thanks{ National University of Mongolia, Baga-toiruu, Sukhbaatar district, Ulaanbaatar, Mongolia. Email: \texttt{tsogzolmaa@num.edu.mn}}
\\
\\
{\it On the occasion of  Ron Doney's 80th birthday}
}
\begin{document}

\maketitle

\begin{abstract}
\noindent  Suppose that $\mathsf{S}$ is a closed set of the unit sphere $\mathbb{S}^{d-1} = \{x\in \mathbb{R}^d: |x| =1\}$ in dimension $d\geq2$, which has positive surface measure.   We construct the law of absorption of an isotropic stable L\'evy process in dimension $d\geq2$ conditioned to approach $\mathsf{S}$ continuously, allowing for the interior and exterior of $\mathbb{S}^{d-1}$ to be visited infinitely often. Additionally, we show that this 
 process is in duality with the underlying stable L\'evy process. 
 We can replicate the aforementioned results by similar ones in the setting that $\mathsf{S}$ is replaced by $\mathsf{D}$,  a closed bounded subset of the hyperplane $\{x\in\mathbb{R}^d : (x, v) = 0\}$ with positive surface measure, where $v$ is the unit orthogonal vector and where $(\cdot,\cdot )$ is the usual Euclidean inner product.
 Our results complement similar results of the authors \cite{KPS} in which the stable process was further constrained to attract to and repel from $\mathsf{S}$ from either the exterior or the interior of the unit sphere.

\medskip

\noindent {\bf Key words:} Stable process, time reversal, duality.
\medskip

\noindent {\bf Mathematics Subject Classification:}  60J80, 60E10.
\end{abstract}

\section{Introduction}

Let $X=(X_t, t \geq 0)$ be a $d$-dimensional  stable  L\'evy process $(d\geq 2)$ with probabilities $(\mathbb{P}_x, x \in \mathbb{R}^d)$. This means that $X$ has c\`adl\`ag paths with stationary and independent increments as well as there existing an  $\alpha>0$ such that, for $c>0,$ and $x \in \mathbb{R}^d,$
under $\mathbb{P}_x$,
\[
\text{ the law of } (cX_{c^{-\alpha}t}, t \geq 0) \text{ is equal to  }\mathbb{P}_{cx}. 
 \]
 The latter is the property of so-called self-similarity.
It turns out that stable L\'evy processes necessarily have  $\alpha\in (0,2]$. The case $\alpha=2$ is that of   standard $d$-dimensional Brownian motion, thus has a continuous path. All other $\alpha\in(0,2)$ have no Gaussian component and are  pure jump processes. In this article we are specifically interested in phenomena that can only occur when jumps are present. We thus restrict ourselves henceforth to the setting $\alpha\in(0,2)$. 

\smallskip

Although Brownian motion is isotropic, this need not be the case in the stable case when $\alpha\in(0,2)$. {\it Nonetheless, we will restrict  to the  isotropic setting.} To be more precise, this means, for all orthogonal transformations $U:\mathbb{R}^d \mapsto \mathbb{R}^d$ and $x \in \mathbb{R}^d,$ 
\[
 \quad \textit{the law of} \quad (UX_t, t \geq 0) \textit{ under} \quad \mathbb{P}_x  \textit{ is equal to }  (X_t,t\geq 0) \textit{ under } \mathbb{P}_{Ux}.
 \]
For convenience, we will henceforth refer to $X$  as a {\it stable process}. 

\smallskip

As a L\'evy process, our stable   process of index $(0,2)$ has a characteristic triplet $(0,0,\Pi)$, where the jump measure $\Pi$ satisfies 
\begin{equation}
\Pi(B) = \frac{2^{\alpha} \Gamma((d+\alpha)/2)}{\pi^{d/2} |\Gamma(-\alpha/2)|} \int_B \frac{1}{|y|^{\alpha+d}} \ell_d({\rm d} y), \quad B \subseteq \mathcal{B}(\mathbb{R}^d),
\label{bigPi}
\end{equation}
where $\ell_d$ is $d$-dimensional Lebesgue measure\footnote{We will distinguish integrals with respect to one-dimensional Lebesgue measure as taking the form $\int\cdot\, \dd x$, where as higher dimensional integrals will always indicate the dimension, for example $\int \cdot\, \ell_d(\dd x)$.}. 
This is equivalent to identifying its characteristic exponent as 
\[
\Psi(\theta)=-\frac{1}{t}\log \mathbb{E} ({\rm e}^{\iu\theta \cdot X_t}) =|\theta|^{\alpha}, \quad \theta \in \mathbb{R}^d,
\]
where we write $\mathbb{P}$ in preference to $\mathbb{P}_0$.

\smallskip

In this article, we characterise the law of  a stable process conditioned to continuously approach  a closed   subdomain of the surface of a unit sphere, say $\mathsf{S}\subseteq\mathbb{S}^{d-1}= \{x\in \mathbb{R}^d: |x| =1\}$, which has non-zero surface measure. Moreover, our conditioning will allow the stable process to approach $\mathsf{S}$ by visiting the exterior and interior of $\mathbb{S}^{d-1}$ infinitely often. 
We note that when $\alpha\in(1,2)$, stable processes will hit the unit sphere with probability 1 and otherwise, when $\alpha\in(0,1]$ it hits the unit sphere with probability zero; see e.g. \cite{Port} or \cite{KALEA}. The aforesaid conditioning is thus only of interest when $\alpha\in(0,1]$. 

\smallskip

In addition to constructing the conditioned process, we develop an expression for the limiting point of contact on $\mathsf{S}$. Moreover, we show that, when time reversed from the strike point on $\mathsf{S}$, the resulting process can be described as nothing more than the  stable process itself.  The extreme cases that $\mathsf{S} = \mathbb{S}^{d-1}$ (the whole unit sphere) and $\mathsf{S} = \{\vartheta\}\in \mathbb{S}^{d-1}$ (a single point on the unit sphere) are included in our analysis, however,  we will otherwise insist that the Lebesgue surface measure of $\mathsf{S}$ is strictly positive.  In order to make our results pertinent, we restrict ourselves to the case that $d \geq 2$.


\smallskip

 It turns out that the methodology we use here is robust enough to cover a similar suite of results for the case of an isotropic stable process conditioned to a closed subdomain of an arbitrary $(d-1)$-dimensional hyperplane  in $\mathbb{R}^d$ that is orthogonal to an arbitrary unit-length vector $v\in\mathbb{R}^d$. 

\smallskip

Our results naturally complement those of  the recent paper \cite{KPS}, which considers a similar type of conditioning, albeit  requiring the stable process  to additionally remain  either inside or outside of the unit ball. 
Other related works include \cite{Phil}, who considered a real valued L\'evy process conditioned to continuously approach the boundary of the interval $[-1,1]$ from the outside. 


\section{Oscillatory attraction towards $\mathsf{S}$}\label{attraction}

Let $\mathbb{D}(\mathbb{R}^d)$ denote the space of c\'adl\'ag paths $\omega:[0,\infty)\to\mathbb{R}^d\cup\partial$ with lifetime $\zeta(\omega)=\inf\{s>0 :\omega(s)=\partial\}$, where $\partial$ is a cemetery point. The space $\mathbb{D}(\mathbb{R}^d)$ will be equipped with the Skorokhod topology, with its closed $\sigma$-algebra $\mathcal{F}$ and natural filtration $(\mathcal{F}_t, t\geq 0)$. 
The reader will note that we will also use 
a similar notion for $\mathbb{D}(E)$ later on in this text in the obvious way for an $E$-valued Markov process.
We will always work with $X = (X_t,t\geq0)$ to mean  the coordinate process defined on the space $\mathbb{D}(\mathbb{R}^d)$. Hence, the notation of the introduction indicates that $\mathbb{P}= (\mathbb{P}_x, x\in\mathbb{R}^d)$ is such that $(X,\mathbb{P})$ is our stable process.

\smallskip

We want to construct the law of the stable process conditioned to continuously limit to $\mathsf{S}\in\mathbb{S}^{d-1}$  
whilst visiting both $\mathbb{B}_d: = \{x\in\mathbb{R}^d: |x| < 1\}$ and  $ \bar{\mathbb{B}}_d^c : =\mathbb{R}^{d}\setminus \bar{\mathbb{B}}_d$ infinitely often at arbitrarily small times prior to 
striking $\mathsf{S}$.
We shall denote the associated probabilities by $\mathbb{P}^\mathsf{S}= (\mathbb{P}^\mathsf{S}_x, x\in\mathbb{R}^d)$. For a more precise definition of what is meant by this form of conditioning, 
let us introduce the stopping times,
\begin{equation}
\tau_\beta = \inf\{t>0: \beta^{-1}<|X_t|<\beta\},\qquad  \text{ for } \beta >1.
\label{taubeta}
\end{equation}
Whenever it is well defined, 
we will write, for $t\geq 0$, $\Lambda\in\mathcal{F}_t$ and $x\not\in\mathsf{S}$, 
\begin{equation}
\mathbb{P}^\mathsf{S}_x(\Lambda, \, t<\zeta) = \lim_{\beta\to1}
\lim_{\varepsilon \to0}\mathbb{P}_x\left(\Lambda, t<\tau_{\beta}
\big| 
\, \tau_{\mathsf{S}_\varepsilon}<\infty \right),
\label{conditionlimit}
\end{equation}
where 
\[
\tau_{\mathsf{S}_\varepsilon} = \inf\{t>0: X_t\in \mathsf{S}_\varepsilon\}\,\text{ and  }\,
\mathsf{S}_\varepsilon : = \{x\in  \mathbb{R}^d: 1-\varepsilon\leq |x|\leq  1+\varepsilon \text{ and }\arg(x)\in \mathsf{S}\}.
\]
Our first  main result clarifies that $(X, \mathbb{P}^\mathsf{S})$ is indeed well defined. In the theorem below, and thereafter, we will understand $\sigma_1$ to mean the Lebesgue surface measure on $\mathbb{S}^{d-1}$ normalised to have unit mass, i.e. $\sigma_1(\mathbb{S}^{d-1}) =1$.

\begin{theorem}\label{main}
Suppose that $\alpha\in(0,1]$ and the closed set $\mathsf{S}\subseteq\mathbb{S}^{d-1}$ is such that  $\sigma_1(\mathsf{S})>0$. 
For $\alpha\in(0,1]$, the process $(X, \mathbb{P}^\mathsf{S})$ is well defined such that 
\begin{equation}
\left.\frac{\dd \mathbb{P}^\mathsf{S}_x}{\dd \mathbb{P}_x}\right|_{\mathcal{F}_t} = \frac{H_\mathsf{S}(X_t)}{H_\mathsf{S} (x)}, \qquad t\geq 0, x\not\in \mathsf{S},
\label{doobH}
\end{equation}
where
\[
H_\mathsf{S} (x) =
\int_\mathsf{S} |x-\theta|^{\alpha - d} \sigma_1(\dd \theta), \qquad x\not\in\mathsf{S}.
\]
\end{theorem}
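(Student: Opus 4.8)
The plan is to show that $H_\mathsf{S}$ is a positive harmonic function for the stable process killed on entering $\mathsf{S}$, i.e. that $(H_\mathsf{S}(X_{t\wedge\tau_\mathsf{S}}))_{t\ge0}$ is a $\mathbb{P}_x$-martingale for $x\notin\mathsf{S}$, and then identify the Doob $h$-transform by $H_\mathsf{S}$ with the limiting conditioning in \eqref{conditionlimit}. First I would recall that for isotropic $\alpha$-stable processes with $\alpha\in(0,1]$ the single-point kernel $x\mapsto|x-\theta|^{\alpha-d}$ is, up to a constant, the Riesz kernel and is known to be (excessive, in fact) invariant for the stable process killed at a point in the sense that it is $\alpha$-harmonic on $\mathbb{R}^d\setminus\{\theta\}$; integrating over $\theta\in\mathsf{S}$ against the finite measure $\sigma_1$ preserves this, so $H_\mathsf{S}$ is finite (this needs $\alpha<d$, which holds since $\alpha\le1<2\le d$, and local integrability near $\mathsf{S}$, which holds because $\alpha-d>-d$) and $\alpha$-harmonic off $\mathsf{S}$. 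The regularity statement $\tau_\mathsf{S}<\infty$ having positive probability — equivalently $\mathsf{S}$ being non-polar — is exactly the condition $\sigma_1(\mathsf{S})>0$ together with $\alpha\le1$ (so that points on the sphere, hence sets of the sphere of positive surface measure, are hit); this is where the hypothesis is used, via capacity estimates for Riesz kernels, cf. Port or Blumenthal–Getoor–Ray.

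Next I would make the conditioning precise. Fix $x\notin\mathsf{S}$, $t\ge0$, $\Lambda\in\mathcal{F}_t$, and $\beta>1$. By the Markov property at time $t\wedge\tau_\beta$,
\[
\mathbb{P}_x\bigl(\Lambda,\,t<\tau_\beta,\,\tau_{\mathsf{S}_\varepsilon}<\infty\bigr)
=\mathbb{E}_x\Bigl[\mathbf{1}_{\Lambda,\,t<\tau_\beta}\,\mathbb{P}_{X_t}\bigl(\tau_{\mathsf{S}_\varepsilon}<\infty\bigr)\Bigr],
\]
so the conditional probability in \eqref{conditionlimit} equals the ratio
\[
\frac{\mathbb{E}_x\bigl[\mathbf{1}_{\Lambda,\,t<\tau_\beta}\,\mathbb{P}_{X_t}(\tau_{\mathsf{S}_\varepsilon}<\infty)\bigr]}
{\mathbb{P}_x(\tau_{\mathsf{S}_\varepsilon}<\infty)} .
\]
The crux is then the asymptotics of the hitting probability $u_\varepsilon(y):=\mathbb{P}_y(\tau_{\mathsf{S}_\varepsilon}<\infty)$ as $\varepsilon\to0$: one shows that $u_\varepsilon(y)/u_\varepsilon(x)\to H_\mathsf{S}(y)/H_\mathsf{S}(x)$ locally uniformly in $y$ on compacts away from $\mathsf{S}$. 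This follows because $u_\varepsilon$ is the equilibrium potential (Riesz potential of the equilibrium measure) of the shrinking tube $\mathsf{S}_\varepsilon$; as $\varepsilon\downarrow0$ the (suitably normalised) equilibrium measure of $\mathsf{S}_\varepsilon$ converges weakly to a multiple of the surface measure $\sigma_1$ restricted to $\mathsf{S}$, and the Riesz potential of $\sigma_1|_\mathsf{S}$ is precisely $H_\mathsf{S}$ up to the normalising constant, which cancels in the ratio. Passing to the limit under the expectation is justified by dominated convergence (uniform bounds on $u_\varepsilon(y)/u_\varepsilon(x)$ on $\{|y|<\beta\}$, using the Harnack inequality for $\alpha$-harmonic functions and the fact that on $t<\tau_\beta$ the process stays in a fixed compact annulus), giving
\[
\lim_{\varepsilon\to0}\mathbb{P}_x\bigl(\Lambda,\,t<\tau_\beta\,\big|\,\tau_{\mathsf{S}_\varepsilon}<\infty\bigr)
=\mathbb{E}_x\!\left[\mathbf{1}_{\Lambda,\,t<\tau_\beta}\,\frac{H_\mathsf{S}(X_t)}{H_\mathsf{S}(x)}\right].
\]
Finally, letting $\beta\downarrow1$ removes the restriction $t<\tau_\beta$: since $\tau_\beta\downarrow0$ is false — rather $\tau_\beta$ is the first time the process enters the annulus $\{\beta^{-1}<|X|<\beta\}$, which for $\beta$ close to $1$ forces $|X|$ near $1$ — one checks that $\mathbf{1}_{t<\tau_\beta}\to\mathbf{1}_{t<\zeta}$ under the $h$-transformed measure, where the lifetime $\zeta$ is the time the $h$-process reaches $\mathsf{S}$; monotone/dominated convergence then yields \eqref{doobH}. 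Well-definedness of $(X,\mathbb{P}^\mathsf{S})$ as a Markov process is automatic once $H_\mathsf{S}$ is shown to be an invariant function of the killed process: the $h$-transform is then a bona fide change of measure on each $\mathcal{F}_t$ and the family is consistent.

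The main obstacle I expect is the convergence of equilibrium measures of the tubes $\mathsf{S}_\varepsilon$ to $\sigma_1|_\mathsf{S}$ and the resulting potential-theoretic asymptotics $u_\varepsilon(\cdot)/u_\varepsilon(x)\to H_\mathsf{S}(\cdot)/H_\mathsf{S}(x)$ with enough uniformity to pass the limit inside the expectation; the codimension-one geometry of $\mathsf{S}$ inside $\mathbb{R}^d$ means the equilibrium measures do not simply converge in total variation, and one must track the correct scaling (the capacity of $\mathsf{S}_\varepsilon$ will typically vanish like a power of $\varepsilon$) so that the normalising constants genuinely cancel. Everything else — finiteness and $\alpha$-harmonicity of $H_\mathsf{S}$, the Markov-property bookkeeping, and the two limits in $\varepsilon$ and $\beta$ — is comparatively routine given standard Riesz-potential theory for isotropic stable processes.
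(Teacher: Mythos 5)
Your overall scheme for deducing \eqref{doobH} — Markov property at time $t$ on $\{t<\tau_\beta\}$, rewriting the conditional probability as a ratio involving $\mathbb{P}_{X_t}(\tau_{\mathsf{S}_\varepsilon}<\infty)/\mathbb{P}_x(\tau_{\mathsf{S}_\varepsilon}<\infty)$, dominated convergence in $\varepsilon$, then monotone convergence as $\beta\to1$ — is exactly the paper's route. But the entire weight of the argument rests on the asymptotics $u_\varepsilon(y)/u_\varepsilon(x)\to H_\mathsf{S}(y)/H_\mathsf{S}(x)$, which in the paper is a separate theorem (Theorem \ref{main2}) proved with substantial work, and your justification of it is only an assertion. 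Saying that the suitably normalised equilibrium measure of the tube $\mathsf{S}_\varepsilon$ converges weakly to a multiple of $\sigma_1|_\mathsf{S}$ is not a proof but a restatement of the hard claim: a priori the true equilibrium measure of $\mathsf{S}_\varepsilon$ is neither of product form nor tangentially uniform (one expects edge concentration near the relative boundary of $\mathsf{S}$ in $\mathbb{S}^{d-1}$), and the whole difficulty is to show that these effects disappear at leading order, with the correct capacity scaling $\varepsilon^{1-\alpha}$ (and, for $\alpha=1$, the logarithmic scaling $1/|\log\varepsilon|$, not a power of $\varepsilon$, which your sketch misses). The paper circumvents the unknown equilibrium measure altogether: it guesses an explicit near-equilibrium measure of product type, $\mu_\varepsilon(\dd y)\propto(|y|-(1-\varepsilon))^{-\alpha/2}(1+\varepsilon-|y|)^{-\alpha/2}\ell_d(\dd y)$ restricted to $\mathsf{S}_\varepsilon$, proves via hypergeometric identities that its potential is $1+o(1)$ uniformly on the full spherical shell (Lemma \ref{unif}), controls the contribution of the shell outside a $\delta(\varepsilon)$-enlargement of $\mathsf{S}$ (Lemma \ref{L1}), and sandwiches $\mathbb{P}_x(\tau_{\mathsf{S}_\varepsilon}<\infty)$ between potentials of these explicit measures using \eqref{preuse} and the excessive property \eqref{excessive}. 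You correctly identify this as the main obstacle, but you do not supply the missing argument, so the proof is incomplete at its central step.

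There is also a conceptual error at the start. For $\alpha\in(0,1]$ the unit sphere, and hence $\mathsf{S}$, is polar: $\mathbb{P}_x(\tau_\mathsf{S}<\infty)=0$, which is precisely why the conditioning must be formulated through the tubes $\mathsf{S}_\varepsilon$ as in \eqref{conditionlimit}. Your claim that $\sigma_1(\mathsf{S})>0$ together with $\alpha\le1$ makes $\mathsf{S}$ non-polar is the opposite of the truth, and consequently the proposed martingale $(H_\mathsf{S}(X_{t\wedge\tau_\mathsf{S}}))_{t\ge0}$ is just $(H_\mathsf{S}(X_t))_{t\ge0}$, which is not a martingale: since $|x-\theta|^{\alpha-d}$ is (a multiple of) the potential density, $H_\mathsf{S}$ is excessive and $\mathbb{E}_x[H_\mathsf{S}(X_t)]<H_\mathsf{S}(x)$ strictly. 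Thus "well-definedness is automatic once $H_\mathsf{S}$ is invariant" rests on a false premise; what \eqref{doobH} defines is an $h$-transform by an excessive function, i.e.\ a sub-Markovian semigroup with finite lifetime $\zeta$ (consistent with the restriction to $\{t<\zeta\}$ in \eqref{conditionlimit}), not a probability-preserving change of measure on each $\mathcal{F}_t$. This is repairable, but as written both the harmonicity/martingale claim and the non-polarity claim would fail.
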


Although excluded from the conclusion of Theorem \ref{main}, it is worth dwelling for a moment on   the  extreme case  $\mathsf{S}=\{\theta\}$, for $\theta\in\mathbb{S}^{d-1}$. It has been shown in \cite{KRSg} that,   when $\alpha\in(0,1)$,  conditioning a stable process to continuously limit to a point (which, by stationary and independent increments, can always be arranged to be $\theta\in\mathbb{S}^{d-1}$) results in a family of probability measures $(\mathbb{P}^{\{\theta\}}_x, x\neq \theta) $ which can be identified via a Doob $h$-transform with $h_\theta(x) = |x- \theta|^{\alpha -d}$.  Although the sense in which the conditioning is performed cannot be contextualised via \eqref{conditionlimit}, we see that the resulting $h$-transformation is consistent with the use of the harmonic function  $H_\mathsf{S}$.

The way in which we will prove Theorem \ref{main} will be to prove the following subtle result which establishes the leading order behaviour of the probability of hitting the set 
$
\mathsf{S}_\varepsilon 
$.

\begin{theorem}\label{main2}
Let $\mathsf{S}\subseteq\mathbb{S}^{d-1}$ be a closed subset such that  $\sigma_1(\mathsf{S})>0$. 
\begin{itemize}
\item[(i)] Suppose $\alpha\in(0,1)$. For $x\not\in\mathsf{S}$,
\begin{equation}
\lim_{\varepsilon\to 0}\varepsilon^{\alpha-1} \mathbb{P}_x(\tau_{\mathsf{S}_\varepsilon}<\infty  )
= 
\textcolor{black}{2^{1-2\alpha}\frac{\Gamma((d+\alpha-2)/2)}{ \pi^{d/2}\Gamma(1-\alpha) }\frac{ \Gamma((2-\alpha)/2)}{ \Gamma(2-\alpha)}} H_\mathsf{S}(x).
\label{gammarecursion}
\end{equation}
\item[(ii)]When $\alpha = 1$, we have that, for $x\not\in\mathsf{S}$,
\begin{equation}
\lim_{\varepsilon\to 0}\ |\log \varepsilon| \ \mathbb{P}_x(\tau_{\mathsf{S}_\varepsilon}<\infty  )
= 
\frac{\Gamma((d-1)/2)}{ \pi^{(d-1)/2}} H_\mathsf{S}(x).
\label{gammarecursiona = 1}
\end{equation}
\end{itemize}
\end{theorem}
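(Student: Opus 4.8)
The plan is to estimate $\mathbb{P}_x(\tau_{\mathsf{S}_\varepsilon}<\infty)$ by comparing the hitting problem for the ``shell'' $\mathsf{S}_\varepsilon$ to the hitting problem for a point on the sphere, for which the asymptotics are already understood from the single-point conditioning results of \cite{KRSg}. First I would use the isotropy and known potential theory of the stable process to write down the probability of hitting a small ball $B(\theta,\varepsilon)$ of radius $\varepsilon$ centred at a point $\theta\in\mathbb{S}^{d-1}$: by the standard capacity/Riesz-kernel formula, $\mathbb{P}_x(\tau_{B(\theta,\varepsilon)}<\infty)$ is, to leading order as $\varepsilon\to0$, a constant times $\varepsilon^{d-\alpha}$ (when $\alpha<d$, i.e. always here since $d\ge2>\alpha$) multiplied by the Riesz kernel $|x-\theta|^{\alpha-d}$, the constant being read off from the Riesz capacity of a ball. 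The set $\mathsf{S}_\varepsilon$ is morally a tube of ``length'' of order $1$ along $\mathsf{S}$ and ``thickness'' $2\varepsilon$ in the radial direction, i.e. it looks locally like $\varepsilon\times(\text{a }(d-1)\text{-dimensional piece of surface of bounded size})$; so one expects $\mathbb{P}_x(\tau_{\mathsf{S}_\varepsilon}<\infty)$ to behave like $\varepsilon^{1}$ times something when $\alpha<1$ (the codimension-$1$ object is ``barely'' hit, with the $\varepsilon^{\alpha-1}$ normalisation) and like $1/|\log\varepsilon|$ when $\alpha=1$ (the logarithmic, critical case). This dichotomy $\varepsilon^{1-\alpha}$ vs. $|\log\varepsilon|^{-1}$ is exactly the signature of hitting a $(d-1)$-dimensional set with a process of index $\alpha\le1$.

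The concrete execution I would carry out is as follows. Cover $\mathsf{S}$ by finitely many small surface patches and, on each patch, foliate $\mathsf{S}_\varepsilon$ by radial line segments $\{r\vartheta : 1-\varepsilon\le r\le 1+\varepsilon\}$ indexed by $\vartheta$ in the patch. For a fixed such segment $I_\varepsilon(\vartheta)$, the probability $\mathbb{P}_x(\tau_{I_\varepsilon(\vartheta)}<\infty)$ is a one-dimensional-set hitting probability in $\mathbb{R}^d$; using the known resolvent/potential density of the isotropic stable process and the fact that a segment of length $2\varepsilon$ has Riesz $\alpha$-capacity of order $\varepsilon^{1-\alpha}$ (for $\alpha<1$) or order $1/|\log\varepsilon|$ (for $\alpha=1$)—these capacities of intervals are classical—one gets $\mathbb{P}_x(\tau_{I_\varepsilon(\vartheta)}<\infty)\sim c_{\alpha,d}\,\varepsilon^{1-\alpha}\,|x-\vartheta|^{\alpha-d}$ (resp. with $1/|\log\varepsilon|$), where $c_{\alpha,d}$ is the explicit constant appearing on the right of \eqref{gammarecursion} (resp. \eqref{gammarecursiona = 1}). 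Then one integrates over $\vartheta\in\mathsf{S}$: the union of the segments is $\mathsf{S}_\varepsilon$, and an inclusion–exclusion / subadditivity sandwich shows that $\mathbb{P}_x(\tau_{\mathsf{S}_\varepsilon}<\infty)$ is, to leading order, $\int_\mathsf{S}\mathbb{P}_x(\tau_{I_\varepsilon(\vartheta)}<\infty)\,\sigma_1(\dd\vartheta)$; passing to the limit gives $c_{\alpha,d}\int_\mathsf{S}|x-\vartheta|^{\alpha-d}\sigma_1(\dd\vartheta) = c_{\alpha,d}H_\mathsf{S}(x)$, which is precisely the claim. The constant bookkeeping—reconciling the Riesz-capacity normalisation of an interval with the particular gamma-function constant written in the theorem—is routine but must be done carefully; the appearance of $\Gamma((d+\alpha-2)/2)$ rather than $\Gamma((d+\alpha)/2)$ reflects that we are in effect one dimension down along each radial fibre.

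The main obstacle, and where the real work lies, is the interchange of limit and integral together with the overcounting control: a point $x$ near $\mathsf{S}$ can reach $\mathsf{S}_\varepsilon$ along many nearby fibres at once, so the naive sum over fibres overestimates, while the single-fibre lower bound underestimates; one needs a uniform two-sided estimate on $\mathbb{P}_x(\tau_{I_\varepsilon(\vartheta)}<\infty)$ with error terms that are $o(1)$ relative to the leading $\varepsilon^{1-\alpha}$ (resp. $|\log\varepsilon|^{-1}$) \emph{uniformly} in $\vartheta$ ranging over $\mathsf{S}$ and $x$ in compact sets away from $\mathsf{S}$, plus a correlation estimate showing that, conditionally on hitting $\mathsf{S}_\varepsilon$, the process hits essentially one fibre's worth of the shell so that $\mathbb{E}_x[\#\{\text{fibres hit}\}]\to1$ in the appropriate renormalised sense. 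A clean way to organise this is to write $H_\mathsf{S}^\varepsilon(x):=\mathbb{E}_x[\text{(capacity-weighted) occupation of }\mathsf{S}_\varepsilon]$, show $H_\mathsf{S}^\varepsilon$ is (excessive and) converges locally uniformly to $c_{\alpha,d}H_\mathsf{S}$ after renormalisation, and sandwich $\mathbb{P}_x(\tau_{\mathsf{S}_\varepsilon}<\infty)$ between $H^\varepsilon_\mathsf{S}(x)$ and $H^\varepsilon_\mathsf{S}(x)/\inf_{y\in\mathsf{S}_\varepsilon}\mathbb{P}_y(\tau_{\mathsf{S}_\varepsilon}<\infty)$, with the latter infimum $\to1$; this is the delicate regularity input. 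Once Theorem~\ref{main2} is in hand, Theorem~\ref{main} follows by the usual argument: the ratio $\mathbb{P}_x(\Lambda,t<\tau_\beta\mid\tau_{\mathsf{S}_\varepsilon}<\infty)$ is, by the Markov property at time $t$ on $\{t<\tau_\beta\}$, equal to $\mathbb{E}_x[\mathbf 1_\Lambda\mathbf 1_{t<\tau_\beta}\,\mathbb{P}_{X_t}(\tau_{\mathsf{S}_\varepsilon}<\infty)]/\mathbb{P}_x(\tau_{\mathsf{S}_\varepsilon}<\infty)$, and Theorem~\ref{main2} lets us take $\varepsilon\to0$ to replace both occurrences of the hitting probability by $H_\mathsf{S}$, while letting $\beta\to1$ removes the cutoff; the limiting Radon–Nikodym density is $H_\mathsf{S}(X_t)/H_\mathsf{S}(x)$, and harmonicity of $H_\mathsf{S}$ off $\mathsf{S}$ (inherited from that of each $|\cdot-\theta|^{\alpha-d}$, or directly from the $\varepsilon\to0$ limit of the excessive functions $\mathbb{P}_\cdot(\tau_{\mathsf{S}_\varepsilon}<\infty)$) guarantees this is a genuine martingale, so the Doob $h$-transform is well defined.
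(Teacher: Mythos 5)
There is a genuine gap at the heart of your argument: the fibrewise asymptotic $\mathbb{P}_x(\tau_{I_\varepsilon(\vartheta)}<\infty)\sim c_{\alpha,d}\,\varepsilon^{1-\alpha}|x-\vartheta|^{\alpha-d}$ is false, because each radial segment $I_\varepsilon(\vartheta)$ is a one-dimensional set in $\mathbb{R}^d$ and hence \emph{polar} for the isotropic $\alpha$-stable process whenever $\alpha\le d-1$, which is always the case here ($d\ge2$, $\alpha\le1$). Its Riesz capacity with respect to the relevant kernel $|x|^{\alpha-d}$ is zero; the order-$\varepsilon^{1-\alpha}$ (or $1/|\log\varepsilon|$) capacity of an interval that you invoke is a one-dimensional statement, valid for the kernel $|x|^{\alpha-1}$, and does not transfer to $\mathbb{R}^d$. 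Consequently every individual fibre is hit with probability zero, the quantity $\mathbb{E}_x[\#\{\text{fibres hit}\}]$ is not meaningful (uncountably many fibres, each hit with probability $0$), the single-fibre lower bound in your sandwich is $0$, and the ``integrate over $\vartheta$'' step has no content. The set $\mathsf{S}_\varepsilon$ is hit with positive probability only because it is a genuinely $d$-dimensional shell, and the constant in \eqref{gammarecursion} (note its $d$-dependence through $\Gamma((d+\alpha-2)/2)/\pi^{d/2}$) cannot be recovered from a one-dimensional interval capacity multiplied by the $d$-dimensional Riesz kernel; it comes out of a genuinely $d$-dimensional potential computation.

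Your closing organisational remark is much closer to a workable proof, and is in fact what the paper does: one plants an explicit measure on the whole shell $\mathsf{S}_\varepsilon$ whose radial density is the one-dimensional equilibrium (arcsine-type) density $(|y|-(1-\varepsilon))^{-\alpha/2}(1+\varepsilon-|y|)^{-\alpha/2}$ from \eqref{Simon}, normalised by a constant $c_{\alpha,d}$ (and by $1/|\log\varepsilon|$ when $\alpha=1$), proves that its Riesz potential is $1+o(1)$ \emph{uniformly on the shell} (this is the hypergeometric computation of Lemma \ref{unif}, and is where the exact constant is produced), shows that the contribution of the part of the shell at distance $\delta(\varepsilon)$ from $\mathsf{S}$ is uniformly negligible (Lemma \ref{L1}), and then sandwiches $\mathbb{P}_x(\tau_{\mathsf{S}_\varepsilon}<\infty)$ between potentials of the measure restricted to $\mathsf{S}_\varepsilon$ and to the slightly enlarged $\mathsf{S}^{\delta(\varepsilon)}_\varepsilon$ via the strong Markov identity \eqref{preuse} and the excessive inequality \eqref{excessive}; note that the correct sandwich is governed by the sup and inf of the potential over the shell, not by $\inf_{y\in\mathsf{S}_\varepsilon}\mathbb{P}_y(\tau_{\mathsf{S}_\varepsilon}<\infty)$, which is trivially $1$ at interior points and does not encode the required regularity. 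As written, your proposal does not establish either the order of decay with the stated constant or the uniform control needed for the sandwich, so the key steps remain unproved.
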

Theorem \ref{main2} also gives us the opportunity to understand the strike position of the the conditioned stable process.
Indeed, let $\mathsf{S}' $ be a closed subset of $\mathsf{S}$. Define $\mathsf{S}'_\varepsilon = \{r\theta \colon r \in (1-\varepsilon,1+\varepsilon)\text{ and }\theta\in\mathsf{S}'\}$ and $\tau_{\mathsf{S}'_\varepsilon} := \inf\{t>0 \colon X_t \in \mathsf{S}'_\varepsilon\}$. Then, $\{\tau_{\mathsf{S}'_\varepsilon} < \infty\} \subseteq \{\tau_{\mathsf{S}_\varepsilon} < \infty\}$ and thanks to Theorem \ref{main2}, when $\alpha\in(0,1)$, we have 
\begin{align*}
\lim_{\varepsilon \rightarrow 0} \mathbb{P}_x (\tau_{\mathsf{S}'_\varepsilon} < \infty  | \tau_{\mathsf{S}_\varepsilon} < \infty )&
= \lim_{\varepsilon \rightarrow 0}
 \frac{\varepsilon^{\alpha-1}\mathbb{P}_x(\tau_{\mathsf{S}'_\varepsilon} < \infty
 )}{\varepsilon^{\alpha-1}\mathbb{P}_x( \tau_{\mathsf{S}_\varepsilon} < \infty)}
 = \frac{H_{\mathsf{S}'}(x)}{H_{\mathsf{S}}(x)} \qquad x\not\in\mathsf{S}.
\end{align*}
A similar statement also holds when $\alpha = 1$ by changing the scaling in $\varepsilon$ to $|\log \varepsilon|$. This gives us the following result.

\begin{cor}\label{main3} For a closed $\mathsf{S}\subseteq\mathbb{S}^{d-1}$ such that  $\sigma_1(\mathsf{S})>0$ and $\alpha\in(0,1]$,  we have that for all closed $\mathsf{S}'\subseteq\mathsf{S}$,
\begin{equation}
\mathbb{P}^\mathsf{S}_x(X_{\zeta-} \in \mathsf{S}') = \frac{H_{\mathsf{S}'}(x)}{H_{\mathsf{S}}(x)} \qquad x\not\in\mathsf{S}.
\label{strikepoint}
\end{equation}
\end{cor}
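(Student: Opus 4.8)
The plan is to deduce Corollary~\ref{main3} from Theorem~\ref{main2} and Theorem~\ref{main}, essentially by making rigorous the heuristic displayed immediately before the corollary. The structure of the argument is: first identify $\mathbb{P}^\mathsf{S}_x(X_{\zeta-}\in\mathsf{S}')$ as a limit of conditional probabilities under $\mathbb{P}_x$; then evaluate that limit using the asymptotics of Theorem~\ref{main2}.

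First I would fix a closed $\mathsf{S}'\subseteq\mathsf{S}$ and note the elementary set inclusion $\{\tau_{\mathsf{S}'_\varepsilon}<\infty\}\subseteq\{\tau_{\mathsf{S}_\varepsilon}<\infty\}$, so that the conditional probability $\mathbb{P}_x(\tau_{\mathsf{S}'_\varepsilon}<\infty\mid\tau_{\mathsf{S}_\varepsilon}<\infty)$ is well defined for all $\varepsilon$ small (using $\sigma_1(\mathsf{S})>0$ and $x\notin\mathsf{S}$, so that the denominator is positive, which is guaranteed by the positivity of $H_\mathsf{S}(x)$ and Theorem~\ref{main2}). Writing the conditional probability as the ratio $\varepsilon^{\alpha-1}\mathbb{P}_x(\tau_{\mathsf{S}'_\varepsilon}<\infty)$ over $\varepsilon^{\alpha-1}\mathbb{P}_x(\tau_{\mathsf{S}_\varepsilon}<\infty)$ when $\alpha\in(0,1)$, and analogously with the factor $\varepsilon^{\alpha-1}$ replaced by $|\log\varepsilon|$ when $\alpha=1$, Theorem~\ref{main2} applied to both $\mathsf{S}$ and $\mathsf{S}'$ shows that numerator and denominator converge to $c_{d,\alpha}H_{\mathsf{S}'}(x)$ and $c_{d,\alpha}H_\mathsf{S}(x)$ respectively, with the same constant $c_{d,\alpha}$; hence
\begin{equation}
\lim_{\varepsilon\to0}\mathbb{P}_x(\tau_{\mathsf{S}'_\varepsilon}<\infty\mid\tau_{\mathsf{S}_\varepsilon}<\infty)=\frac{H_{\mathsf{S}'}(x)}{H_\mathsf{S}(x)},\qquad x\notin\mathsf{S}.
\label{strikelimit}
\end{equation}

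The more delicate step is to connect the left-hand side of \eqref{strikelimit} to $\mathbb{P}^\mathsf{S}_x(X_{\zeta-}\in\mathsf{S}')$. The point is that on the event $\{\tau_{\mathsf{S}_\varepsilon}<\infty\}$, if one additionally knows that the process hits the thin annular shell $\mathsf{S}'_\varepsilon$ rather than $\mathsf{S}_\varepsilon\setminus\mathsf{S}'_\varepsilon$, then as $\varepsilon\to0$ the hitting location is forced into $\mathsf{S}'$; conversely, the event $\{X_{\zeta-}\in\mathsf{S}'\}$ under $\mathbb{P}^\mathsf{S}_x$ should be approximated, for small $\varepsilon$, by $\{\tau_{\mathsf{S}'_\varepsilon}<\infty\}$ conditioned on $\{\tau_{\mathsf{S}_\varepsilon}<\infty\}$. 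To make this precise I would appeal to the construction of $\mathbb{P}^\mathsf{S}$ via the double limit in \eqref{conditionlimit} and the change of measure \eqref{doobH} from Theorem~\ref{main}: for a fixed $t$, the event $\{X_{\zeta-}\in\mathsf{S}'\}$ is, up to the correct handling of $\{t<\zeta\}$, a tail event that is captured in the $\varepsilon\to0$ limit by the finite-$\varepsilon$ approximations, and Theorem~\ref{main} guarantees the limit measure exists and is a genuine $h$-transform, so that the limiting conditional probability \eqref{strikelimit} can be identified with $\mathbb{P}^\mathsf{S}_x(X_{\zeta-}\in\mathsf{S}')$. One must check that $\mathsf{S}'$ being \emph{closed} (rather than merely measurable) is enough to pass the distributional limit through — since the boundary $\partial\mathsf{S}'$ in $\mathbb{S}^{d-1}$ could carry mass under the limiting strike distribution, one should either argue that the strike distribution $H_{\mathsf{S}'}(x)/H_\mathsf{S}(x)$ assigns zero mass to such boundaries (which follows from absolute continuity of $\sigma_1$-type measures entering $H$), or work with a monotone class / $\pi$-$\lambda$ argument on the algebra generated by relatively closed subsets of $\mathsf{S}$.

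I expect the main obstacle to be precisely this last identification: turning the hitting-probability ratio \eqref{strikelimit} into a statement about the law of $X_{\zeta-}$ under the conditioned measure $\mathbb{P}^\mathsf{S}_x$. The subtlety is that $X_{\zeta-}$ is not an $\mathcal{F}_t$-measurable functional for any fixed $t$, so one cannot directly apply \eqref{doobH}; instead one needs an approximation argument showing that for the conditioned process, the path at times close to $\zeta$ is well approximated by the behaviour of the unconditioned process on $\{\tau_{\mathsf{S}_\varepsilon}<\infty\}$ near $\tau_{\mathsf{S}_\varepsilon}$, uniformly enough to exchange the $\varepsilon\to0$ limit with the identification of the strike point. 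Everything else — the ratio computation, the choice of scaling by $\varepsilon^{\alpha-1}$ or $|\log\varepsilon|$, and the cancellation of the dimensional constant $c_{d,\alpha}$ — is immediate from Theorem~\ref{main2}.
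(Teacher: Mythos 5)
Your argument is essentially the paper's own: the corollary is obtained there exactly by the ratio computation $\lim_{\varepsilon\to0}\mathbb{P}_x(\tau_{\mathsf{S}'_\varepsilon}<\infty\mid\tau_{\mathsf{S}_\varepsilon}<\infty)=\lim_{\varepsilon\to0}\varepsilon^{\alpha-1}\mathbb{P}_x(\tau_{\mathsf{S}'_\varepsilon}<\infty)/\varepsilon^{\alpha-1}\mathbb{P}_x(\tau_{\mathsf{S}_\varepsilon}<\infty)=H_{\mathsf{S}'}(x)/H_{\mathsf{S}}(x)$ from Theorem \ref{main2}, with $|\log\varepsilon|$ replacing $\varepsilon^{\alpha-1}$ when $\alpha=1$. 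The identification of this limiting conditional hitting probability with $\mathbb{P}^{\mathsf{S}}_x(X_{\zeta-}\in\mathsf{S}')$, which you flag as the delicate step, is treated by the paper as immediate from the construction of $\mathbb{P}^{\mathsf{S}}$ and is given no further justification there, so your proposal matches both the paper's approach and its level of detail.
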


In light of the above Corollary, it is worth remarking that we can also see  the probabilities $\mathbb{P}^\mathsf{S}$ as the result of first conditioning to continuously hit $\mathbb{S}^{d-1}$ and then conditioning the strike point to be in $\mathsf{S}$. Indeed, we note that, for $A\in \mathcal{F}_t$ and $t\geq 0 $, 
\begin{align*}
\mathbb{P}^{\mathbb{S}^{d-1}}_x(A| X_{\zeta-}\in \mathsf{S}) &= \mathbb{E}_x^{\mathbb{S}^{d-1}}\left[\mathbf{1}_A \frac{\mathbb{P}^{\mathbb{S}^{d-1}}_{X_t} (X_{\zeta-} \in \mathsf{S})}{\mathbb{P}^{\mathbb{S}^{d-1}}_x (X_{\zeta-} \in \mathsf{S})}\right]\\
&= \mathbb{E}_x\left[\mathbf{1}_A \frac{H_{\mathbb{S}^{d-1}} (X_t)}{H_{\mathbb{S}^{d-1}} (x)}
\frac{H_\mathsf{S}(X_t)}{H_{\mathbb{S}^{d-1}} (X_t)}\frac{H_{\mathbb{S}^{d-1}} (x)}{H_\mathsf{S}(x)}\right]\\
& =  \mathbb{E}_x\left[\mathbf{1}_A \frac{H_\mathsf{S}(X_t)}{H_\mathsf{S}(x)}\right]\\
&=\mathbb{P}^\mathsf{S}_x(A).
\end{align*} 
Moreover, by shrinking $\mathsf{S}'\subseteq\mathsf{S}\subseteq\mathbb{S}^{d-1}$ to a singleton $\theta\in\mathbb{S}^{d-1}$, 
one can similarly show that  
\[
\mathbb{P}^{\mathsf{S}}_x(A| X_{\zeta-} = \theta) =  \mathbb{P}^{\{\theta\}}_x(A ).
\]
This has the flavour of a Williams' type decomposition that was shown for general L\'evy processes conditioned to stay positive and subordinators conditioned to remain in an interval; see e.g \cite{C96} and \cite{KRS}.

%
%

\section{Oscillatory repulsion from $\mathsf{S}$ and duality}\label{repulsion}

Roughly speaking, we want to describe what we see when we time reverse the process $(X, \mathbb{P}^\mathsf{S})$  from its strike point on $\mathsf{S}$, i.e. its so-called dual process. Such a process will necessarily avoid visiting $\mathsf{S}$. Recalling that, for $\alpha\in(0,1]$, the stable process hits spherical surfaces with probability zero (cf. \cite{KALEA, Port}), a heuristic guess for the aforesaid dual process is the stable process itself (see Figure \ref{fig}). This turns out to be precisely the case. In order to make this rigorous, we will use the language of Hunt-Nagasawa duality for Markov processes.

\begin{figure}[h!]
\begin{center}
\includegraphics[width= 0.5\textwidth]{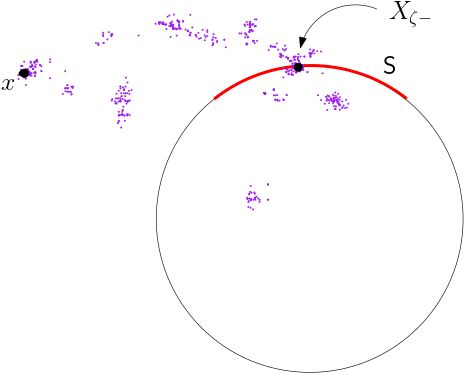}
\caption{\rm The process $(X,\mathbb{P}^\mathsf{S})$ when time reversed is stochastically equal in law to $(X,\mathbb{P})$.}
\label{fig}
\end{center}

\end{figure}

 Suppose that  $Y = (Y_t, t\leq \zeta)$ with probabilities ${\rm\texttt{P}}_x$, $x\in E$, is a regular Markov process on an open domain $E \subseteq \mathbb{R}^d$ (or more generally, a locally compact Hausdorff space with countable base), with cemetery state $\Delta$ and killing time $\zeta=\inf\{t>0: Y_t = \Delta\}$. Let us additionally write   ${\texttt P}_\nu = \int_{E}\nu(\dd a){\texttt P}_a$, for any probability measure $\nu$ on the state space of $Y$.
	
	\smallskip
	
	Suppose that $\mathcal{G}$ is the $\sigma$-algebra generated by $Y$ and  write $\mathcal{G}({\texttt P}_\nu)$ for its completion by the null sets of ${\texttt P}_\nu$. Moreover, write $\overline{\mathcal G} =\bigcap_{\nu} \mathcal{G}({\texttt P}_\nu)$, where the intersection is taken over all probability measures on the state space of $Y$, excluding the cemetery state.
		A finite  random time $\texttt{k}$ is called an $L$-time (generalized last exit time) if, given a coordinate process $\omega = (\omega_t, t\geq0)$ on $\mathbb{D}(E)$, 
\begin{itemize}
	\item[(i)] $\texttt{k}$ is measurable in $\overline{\mathcal G}$, and  $\texttt{k}\leq \zeta$  almost surely with respect to ${\texttt P}_\nu$, for all $\nu$,
	\item[(ii)] $\{s<\texttt{k}(\omega)-t\}=\{s<\theta_t\circ\texttt{k}\}$ for all $t,s\geq 0$,
\end{itemize}	
where $\theta_t$ is the Markov shift of $\omega$ to time $t$.
The most important examples of $L$-times are killing times and last  exit times from closed sets.

\begin{theorem}\label{Naga} Suppose that $\alpha\in(0,1]$. For a given closed set $\mathsf{S}\subset\mathbb{S}^{d-1}$ with $\sigma_1(\mathsf{S})>0$, write
\begin{equation}
\nu(\dd a) : = \frac{\sigma_1(\dd a)}{\sigma_1(\mathsf{S})}, \qquad a \in \mathsf{S}.
\label{closed}
\end{equation}
For every $L$-time  $\emph{\texttt{k}}$ of $(X, \mathbb{P})$, the process
$(
X_{(\emph{\texttt{k}} - t)-}, t< \emph{\texttt{k}}
)$
under $\mathbb{P}_\nu$ is a time-homogeneous Markov process whose transition probabilities agree with those of $(X,\mathbb{P}^\mathsf{S})$.
\end{theorem}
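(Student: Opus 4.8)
The plan is to invoke the Hunt--Nagasawa duality theorem for Markov processes (as developed by Nagasawa and later refined; see e.g. the treatments in Getoor's or Sharpe's monographs, and the version used in \cite{KPS}), which states that if $(X,\mathbb{P})$ is a nice (regular, standard) Markov process with a finite excessive (or $\sigma$-finite invariant) reference measure $m$, and $\texttt{k}$ is an $L$-time, then the reversed process $(X_{(\texttt{k}-t)-}, t<\texttt{k})$, started under the appropriate entrance law, is a time-homogeneous Markov process whose semigroup is the $m$-dual (adjoint) semigroup of $(X,\mathbb{P})$, possibly after a Doob $h$-transform determined by the distribution of $X_{\texttt{k}-}$. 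So first I would recall the precise statement of this theorem and check that the isotropic stable process $(X,\mathbb{P})$ satisfies its hypotheses: it is a regular Feller process, and since it is isotropic it is in (self-)duality with itself with respect to $d$-dimensional Lebesgue measure $\ell_d$ (the transition density is symmetric, $p_t(x,y)=p_t(y,x)$, by symmetry of $\Pi$ in \eqref{bigPi}). Thus the $\ell_d$-dual of $(X,\mathbb{P})$ is again $(X,\mathbb{P})$ itself, which is the source of the heuristic in Figure \ref{fig}.

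The substantive point is to identify the $h$-transform that appears when one reverses from the specific random time associated with the measure $\nu$ of \eqref{closed}. The general Hunt--Nagasawa recipe says that reversing from an $L$-time $\texttt{k}$ with $X_{\texttt{k}-}$ distributed according to some measure yields the dual process $h$-transformed by the function $h(x) = $ (density, against the reference measure, of the law of $X_{\texttt{k}-}$ seen through the dual resolvent), or more precisely by the co-excessive function built from the $\texttt{k}$-entrance behaviour. Here I would take $\texttt{k}$ to be, informally, the ``last time $X$ is near $\mathsf{S}$'' — but since for $\alpha\in(0,1]$ the process $X$ does not hit $\mathsf{S}$ at all (hitting probability zero, by \cite{Port,KALEA}), one cannot literally use a last-exit time from $\mathsf{S}$. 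The correct move, which is exactly what the theorem's phrasing ``for every $L$-time $\texttt{k}$ of $(X,\mathbb{P})$ \ldots under $\mathbb{P}_\nu$'' encodes, is to run the reversal under the \emph{entrance law} $\mathbb{P}_\nu$ where $\nu$ sits on $\mathsf{S}$: one shows that $\mathbb{P}_\nu$-a.s. the process can be started ``from $\mathsf{S}$'' in a well-defined way (using the fact that $H_\mathsf{S}$ is finite and the potential-theoretic regularity of $\mathsf{S}$), and that the resulting law is precisely $\ell_d$-dual composed with the $h$-transform by $H_\mathsf{S}$. Concretely, I expect the computation to reduce to showing that for test functions, using reversibility of $p_t$,
\[
\mathbb{E}_\nu\!\left[f(X_{(\texttt{k}-t)-})\,g(X_{\texttt{k}-})\right]
\]
unfolds, via the Markov property at time $\texttt{k}-t$ and the duality $p_t(x,y)=p_t(y,x)$, into an expression featuring $H_\mathsf{S}(X_{\texttt{k}-t})$ in the denominator position — i.e. exactly the Radon--Nikodym derivative \eqref{doobH} of $\mathbb{P}^\mathsf{S}$ — where $H_\mathsf{S}(x)=\int_\mathsf{S}|x-\theta|^{\alpha-d}\sigma_1(\dd\theta)$ arises as $\int_\mathsf{S}$ (of the occupation/Green kernel against $\nu$), using that $|x-\theta|^{\alpha-d}$ is (up to normalization) the stable Green function kernel for a point on $\mathbb{S}^{d-1}$, matching the single-point $h$-transform of \cite{KRSg} recalled after Theorem \ref{main}.

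Accordingly the key steps, in order, would be: (1) state the Hunt--Nagasawa theorem in the form applicable to transient-in-the-relevant-sense standard Markov processes and verify $(X,\mathbb{P})$ satisfies its hypotheses, noting self-duality with respect to $\ell_d$; (2) argue that $\mathbb{P}_\nu$ is a legitimate starting (entrance) law for the reversal, i.e. that the time-reversed object $(X_{(\texttt{k}-t)-}, t<\texttt{k})$ under $\mathbb{P}_\nu$ is well-defined and Markov — this is where one must handle the fact that $\mathsf{S}$ is polar for $X$, presumably by approximating $\mathsf{S}$ by the thickened sets $\mathsf{S}_\varepsilon$ (which \emph{are} hit, since they have positive $\ell_d$-measure) and taking $\varepsilon\to0$, invoking Theorem \ref{main2} to control the normalization $\varepsilon^{\alpha-1}$ (or $|\log\varepsilon|$ when $\alpha=1$); (3) identify the transition semigroup of the reversed process as the $\ell_d$-dual of $X$ $h$-transformed by $H_\mathsf{S}$, hence equal to the semigroup of $(X,\mathbb{P}^\mathsf{S})$ by Theorem \ref{main}; and (4) confirm time-homogeneity, which is automatic from the $L$-time property (ii) together with the Markov property. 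The main obstacle I anticipate is step (2): making rigorous the reversal ``from $\mathsf{S}$'' when $\mathsf{S}$ is not actually hit, which requires either a careful entrance-law / $h$-path argument or a limiting procedure along $\mathsf{S}_\varepsilon$ with the precise asymptotics of Theorem \ref{main2} and Corollary \ref{main3} pinning down the exit distribution $X_{\zeta-}\sim H_{\mathsf{S}'}/H_\mathsf{S}$; everything downstream (the self-duality and the algebraic identification of the $h$-transform) should then be comparatively routine, essentially mirroring the argument in \cite{KPS}.
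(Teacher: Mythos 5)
Your overall route---Nagasawa/Hunt time-reversal, self-duality of the stable process with respect to $\ell_d$, and identification of the reversed semigroup as the Doob $h$-transform by $H_\mathsf{S}$, with $H_\mathsf{S}$ arising as the potential of $\nu$---is exactly the paper's. The genuine problem is your step (2), which rests on a misreading of what is being reversed: the forward process handed to Nagasawa's theorem is the \emph{unconditioned} stable process with initial distribution $\nu$, i.e.\ $\mathbb{P}_\nu=\int_\mathsf{S}\nu(\dd a)\,\mathbb{P}_a$, and since $\nu$ is simply a probability measure on a subset of $\mathbb{R}^d$, starting the process there is trivially well defined. The polarity of $\mathsf{S}$ for $\alpha\in(0,1]$ is irrelevant at this stage: the process starts on $\mathsf{S}$ and leaves, it never needs to hit it, so no entrance-law construction, no approximation by $\mathsf{S}_\varepsilon$, and no appeal to the asymptotics of Theorem \ref{main2} or Corollary \ref{main3} enters the proof (those are used to build $\mathbb{P}^\mathsf{S}$ in Theorem \ref{main}, which is taken as given here). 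The ``main obstacle'' you anticipate therefore does not exist, and the limiting procedure you sketch to overcome it is not how the argument goes.

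What actually has to be checked are Nagasawa's hypotheses (A) and (B) for the pair $(X,\mathbb{P}_\nu)$ (forward) and $(X,\mathbb{P}^\mathsf{S})$ (candidate dual). For (A), one computes the reference measure $\eta(\dd x)=\int_\mathsf{S}U(a,\dd x)\,\nu(\dd a)$ using the stable potential $U(a,\dd x)=c\,|x-a|^{\alpha-d}\,\ell_d(\dd x)$ (transient since $\alpha<2\le d$), giving $\eta(\dd x)\propto H_\mathsf{S}(x)\,\ell_d(\dd x)$; then Hunt's switching identity $\mathcal{P}_t(y,\dd x)\,\dd y=\mathcal{P}_t(x,\dd y)\,\dd x$ yields the weak duality $\mathcal{P}_t(y,\dd x)\,\eta(\dd y)=\mathcal{P}^\mathsf{S}_t(x,\dd y)\,\eta(\dd x)$, because multiplying and dividing by $H_\mathsf{S}$ produces exactly the kernel of the $H_\mathsf{S}$-transform from Theorem \ref{main}. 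Hypothesis (B) is a right-continuity requirement, immediate since $\mathcal{P}^\mathsf{S}$ is an $h$-transform of the Feller semigroup of $X$. With (A) and (B), Theorem \ref{Ndual} applies to every $L$-time and gives the statement. Note also that in Nagasawa's formulation the $h$-transform is dictated by the potential of the initial law $\nu$ (i.e.\ by $\eta$), not by the law of $X_{\texttt{k}-}$ as you suggest. Your steps (1), (3) and (4) are sound; step (2) should be replaced by this much simpler verification.
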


{
\section{The setting of a subset in an $\mathbb{R}^{d-1}$ hyperplane}
As alluded to in the introduction, the methods use in Section \ref{attraction} and \ref{repulsion} are robust  enough  to deal with the setting of an arbitrary $(d-1)$-dimensional hyperplane in $\mathbb{R}^d$. Without loss of generality, we can 
describe such a hyperplane with unit orthogonal vector $v\in \mathbb{S}^{d-1}$ via 
\[
\mathbb{H}^{d-1}  =\{x\in\mathbb{R}^{d}: (x, v ) = 0\}, 
\]
where $(\cdot,\cdot )$ is the usual Euclidean inner product. Henceforth, we will assume that $v\in\mathbb{S}^{d-1}$ is given, as it otherwise plays no role in the forthcoming. We are interested in defining the law of the stable process conditioned to hit $\mathsf{D}\subseteq\mathbb{H}^{d-1}$ in a similar spirit to the discussion in Section \ref{attraction}.
\smallskip

To this end, let us  define 
\[
\kappa_\beta = \inf\{t>0: -\beta<(v, X_t)<\beta\},\qquad \text{ for } \beta >0.
\]
Whenever it is well defined, 
we will write, for $t\geq 0$, $\Lambda\in\mathcal{F}_t$ and $x\not\in\mathsf{D}$, 
\begin{equation}
\mathbb{P}^\mathsf{D}_x(\Lambda, \, t<\zeta) = \lim_{\beta\to0}
\lim_{\varepsilon \to0}\mathbb{P}_x\left(\Lambda, t<\kappa_{\beta}
\big| 
\, \tau_{\mathsf{D}_\varepsilon}<\infty \right),
\label{conditionlimithyper}
\end{equation}
where 
\[
\tau_{\mathsf{D}_\varepsilon} = \inf\{t>0: X_t\in \mathsf{D}_\varepsilon\}\qquad\text{ and  }\qquad
\mathsf{D}_\varepsilon : = \{x\in  \mathbb{R}^d: -\varepsilon\leq (v, x)\leq  \varepsilon \text{ and }\hat{x}\in \mathsf{D}\}.
\]
Here $\hat x$ denotes the orthogonal projection of $x$ onto $\mathbb{H}^{d-1}$; in other words. $\hat{x} = x - v(v,x).$
We can gather the analogous conclusions of Theorems \ref{main}, \ref{main2}, \ref{Naga} and Corollary \ref{main3} into one theorem.

\begin{theorem}\label{main5}
Suppose that $\alpha\in(0,1]$ and the closed and bounded set $\mathsf{D}\subseteq\mathbb{H}^{d-1}$ is such that  $0<\ell_{d-1}(\mathsf{D})<\infty$, where we recall that  $\ell_{d-1}$ is $(d-1)$-dimensional Lebesgue measure.
\begin{itemize}
\item[(i)] Suppose $\alpha\in(0,1)$. For $x\not\in\mathsf{D}$,
\begin{equation}
\lim_{\varepsilon\to 0}\varepsilon^{\alpha-1} \mathbb{P}_x(\tau_{\mathsf{D}_\varepsilon}<\infty  )
= 
2^{1-\alpha}\pi^{-(d-2)/2}\frac{\Gamma(\frac{d-2}{2})\Gamma(\frac{d-\alpha}{2}) \Gamma(\frac{2-\alpha}{2})^2}{\Gamma(\frac{1-\alpha}{2})\Gamma(\frac{d-1}{2})\Gamma(2-\alpha)}M_\mathsf{D}(x),
\label{gammarecursion2}
\end{equation}
where
\[
M_\mathsf{D} (x) =
\int_\mathsf{D} |x-y|^{\alpha - d} \ell_{d-1}(\dd y), \qquad x\not\in\mathsf{D}.
\]

\item[(ii)] Suppose $\alpha = 1$. For $x\not\in\mathsf{D}$,
\begin{equation}
\lim_{\varepsilon\to 0} \ |\log \varepsilon|\ \mathbb{P}_x(\tau_{\mathsf{D}_\varepsilon}<\infty  )
= 
\frac{\Gamma(\frac{d-2}{2})}{\pi^{(d-2)/2}}M_\mathsf{D}(x),
\label{gammarecursion2a=1}
\end{equation}

\item[(iii)]
 The process $(X, \mathbb{P}^\mathsf{D})$ is well defined such that 
\begin{equation}
\left.\frac{\dd \mathbb{P}^\mathsf{D}_x}{\dd \mathbb{P}_x}\right|_{\mathcal{F}_t} = \frac{M_\mathsf{D}(X_t)}{M_\mathsf{D} (x)}, \qquad t\geq 0, x\not\in \mathsf{D}.
\label{doobM}
\end{equation}
\item[(iv)]We have for all closed $\mathsf{D}'\subseteq\mathsf{D}$,
\begin{equation}
\mathbb{P}^\mathsf{D}_x(X_{\zeta-} \in \mathsf{D}') = \frac{M_{\mathsf{D}'}(x)}{M_{\mathsf{D}}(x)} \qquad x\not\in\mathsf{D}.
\label{strikepoint2}
\end{equation}
\item[(v)] Write
$
\nu(\dd a) : = {\ell_{d-1}(\dd a)}/{\ell_{d-1}(\mathsf{D})}$, $ a \in \mathsf{D}.
$
For every $L$-time  $\emph{\texttt{k}}$ of $(X, \mathbb{P})$, the process
$(
X_{(\emph{\texttt{k}} - t)-}, t< \emph{\texttt{k}}
)$
under $\mathbb{P}_\nu$ is a time-homogeneous Markov process whose transition probabilities  agree with those of $(X,\mathbb{P}^\mathsf{D})$.
\end{itemize}
\end{theorem}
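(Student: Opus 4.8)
The plan is to mirror the sphere case essentially line by line, exploiting the fact that the hyperplane setting is the ``flat'' limit of the spherical one. The core of the matter, as in Theorems \ref{main}--\ref{Naga}, is to establish the leading-order asymptotics (i) and (ii) for $\mathbb{P}_x(\tau_{\mathsf{D}_\varepsilon} < \infty)$; once these are in hand, parts (iii), (iv) and (v) follow by the same soft arguments as in Sections \ref{attraction}--\ref{repulsion}. Concretely: for (iii) one writes, for $\Lambda \in \mathcal{F}_t$ and $x \notin \mathsf{D}$,
\[
\mathbb{P}_x\bigl(\Lambda,\, t<\kappa_\beta \,\big|\, \tau_{\mathsf{D}_\varepsilon}<\infty\bigr)
= \mathbb{E}_x\!\left[\mathbf{1}_\Lambda \mathbf{1}_{\{t<\kappa_\beta\}}\, \frac{\mathbb{P}_{X_t}(\tau_{\mathsf{D}_\varepsilon}<\infty)}{\mathbb{P}_x(\tau_{\mathsf{D}_\varepsilon}<\infty)}\right],
\]
multiply and divide by $\varepsilon^{\alpha-1}$ (or $|\log\varepsilon|$ when $\alpha=1$), pass to the limit $\varepsilon\to 0$ using (i)--(ii) together with a uniform-integrability/domination bound to justify the interchange, obtaining the $h$-transform with $h = M_\mathsf{D}$ restricted to $\{t<\kappa_\beta\}$; then let $\beta\to 0$ and check that $\kappa_\beta \uparrow \zeta$ under $\mathbb{P}^\mathsf{D}_x$, i.e. that the limiting process is indeed killed precisely when it approaches $\mathbb{H}^{d-1}$. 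That $M_\mathsf{D}$ is invariant for the killed semigroup — equivalently $\alpha$-harmonic for $X$ off $\mathsf{D}$, which is clear since $|x-y|^{\alpha-d}$ is (a constant multiple of) the Riesz kernel and hence $X$-harmonic away from the pole — gives the strong Markov/Doob consistency. Part (iv) is then the verbatim analogue of the computation preceding Corollary \ref{main3}: for closed $\mathsf{D}'\subseteq\mathsf{D}$ one has $\{\tau_{\mathsf{D}'_\varepsilon}<\infty\}\subseteq\{\tau_{\mathsf{D}_\varepsilon}<\infty\}$, and dividing the two asymptotics in (i) (the prefactor cancels) yields $\mathbb{P}^\mathsf{D}_x(X_{\zeta-}\in\mathsf{D}') = M_{\mathsf{D}'}(x)/M_\mathsf{D}(x)$. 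Part (v) is an application of Hunt--Nagasawa duality exactly as in Theorem \ref{Naga}: the entrance law from $\mathsf{D}$ under the time reversal is the normalized surface measure $\nu$ because $M_\mathsf{D}(x) = \int_\mathsf{D}|x-y|^{\alpha-d}\ell_{d-1}(\dd y)$ is, up to normalization, the occupation/last-exit kernel, and self-duality of the isotropic stable process (it equals its own dual) identifies the reversed transition semigroup with that of $(X,\mathbb{P}^\mathsf{D})$.

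For the asymptotics (i)--(ii) themselves, I would proceed in two steps. First, reduce to a single point of $\mathsf{D}$: by isotropy and spatial homogeneity we may take that point to be the origin, and the hitting probability of the full slab-like set $\mathsf{D}_\varepsilon$ is controlled above and below by integrating the point estimate against $\ell_{d-1}$ over $\mathsf{D}$, with error terms that vanish after multiplying by $\varepsilon^{\alpha-1}$ (resp. $|\log\varepsilon|$); this is where boundedness and $0<\ell_{d-1}(\mathsf{D})<\infty$ enter, ensuring both the limit and the dominating function $M_\mathsf{D}$ are finite. Second, compute the one-point asymptotic: the probability that $X$ started from $x$ ever enters the $\varepsilon$-thin disc $\{|(v,y)|\le\varepsilon,\ \hat y \in B(0,\delta)\}$ can be written via the Riesz potential / the known formula for the hitting distribution of a small set for the isotropic stable process, and as $\varepsilon\to 0$ the leading term is a universal constant (the one displayed in \eqref{gammarecursion2}, \eqref{gammarecursion2a=1}) times $|x|^{\alpha-d} = |x-0|^{\alpha-d}$. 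The cleanest route to the constant is to recognise the $\varepsilon$-disc hitting probability as the capacity-type quantity $\varepsilon^{1-\alpha}c_{d,\alpha}|x-y|^{\alpha-d}(1+o(1))$ and read off $c_{d,\alpha}$ either from the explicit Riesz capacity of a $(d-1)$-dimensional disc or, following \cite{KPS}, by a subordination/skew-product argument reducing the transverse coordinate $(v,X_t)$ to a one-dimensional symmetric stable process hitting a small interval, for which the analogue of \eqref{gammarecursion} is classical; the $\alpha=1$ logarithmic correction arises in the usual way because the one-dimensional Cauchy process is on the boundary of recurrence.

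The main obstacle will be the second step — pinning down the exact constant and proving the point estimate \emph{uniformly} enough in the location of the point across $\mathsf{D}$ and in the direction of approach to justify integrating it against $\ell_{d-1}$ and passing the limit inside. In the sphere case this was handled by a delicate argument (the ``subtle result'' of Theorem \ref{main2}); here the flat geometry actually makes the hitting-probability computation somewhat cleaner, but one must still be careful that the $o(1)$ in the one-point asymptotic is locally uniform in $x$ away from $\mathsf{D}$ and uniform over $y \in \mathsf{D}$, and that no mass escapes to infinity (handled by boundedness of $\mathsf{D}$ and transience-type tail bounds on $X$ when $\alpha<d$, which always holds here since $d\ge 2 > 1 \ge \alpha$). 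A secondary technical point, needed for (iii)--(v), is verifying $\kappa_\beta\uparrow\zeta$ and the regularity of the limiting Markov process — i.e. that $(X,\mathbb{P}^\mathsf{D})$ genuinely approaches $\mathbb{H}^{d-1}$ at its lifetime and oscillates across it — which one extracts from the $0$-$1$ behaviour of the stable process near hyperplanes (it does not hit $\mathbb{H}^{d-1}$ when $\alpha\le 1$ but visits both half-spaces i.o.), exactly parallel to the role played by $\tau_\beta$ in Section \ref{attraction}.
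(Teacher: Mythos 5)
Your treatment of parts (iii)--(v) is fine and is exactly what the paper does: once the asymptotics (i)--(ii) are available, the Doob $h$-transform, the strike-position formula and the Hunt--Nagasawa time reversal are carried over verbatim from Sections \ref{attraction}--\ref{repulsion}. The problem is your route to (i)--(ii) itself. You propose to ``reduce to a single point of $\mathsf{D}$'' and then recover $\mathbb{P}_x(\tau_{\mathsf{D}_\varepsilon}<\infty)$ by integrating a one-point (thin small disc) hitting estimate against $\ell_{d-1}$ over $\mathsf{D}$, ``controlled above and below''. Hitting probabilities and Riesz capacities are only subadditive, so summing local estimates over a partition of $\mathsf{D}$ can at best give an upper bound; the matching lower bound -- showing that the interaction between neighbouring cells is negligible to leading order -- is precisely the hard part and does not follow from any pointwise estimate, however uniform. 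Moreover the ``point estimate'' you invoke, the exact leading constant for hitting an $\varepsilon$-thin $(d-1)$-dimensional disc when $\alpha\le 1$, is not an off-the-shelf formula: it is essentially equivalent to the theorem being proved. The two shortcuts you suggest for the constant do not work as stated: the transverse coordinate $(v,X_t)$ is indeed a one-dimensional symmetric $\alpha$-stable process, but it is not independent of the tangential motion, so its interval-hitting asymptotics cannot simply be multiplied against a tangential localisation; and at $\alpha=1$ the one-dimensional Cauchy projection is recurrent and hits the slab $\{|(v,y)|\le\varepsilon\}$ with probability one, so the $1/|\log\varepsilon|$ decay cannot come from that projection ``being on the boundary of recurrence'' -- in the paper it arises from the divergence of the tangential potential integral, which forces the truncation at radius $R$ and the $|\log\varepsilon|$ normalisation of the candidate measure.

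The paper avoids all of this by the same guess-and-verify potential-theoretic scheme as in Theorem \ref{main2}: one writes down an explicit candidate equilibrium measure $\rho_\varepsilon$ on the slab, namely \eqref{planarmueps}, whose transverse density is the one-dimensional interval equilibrium density of \eqref{Simon}; a direct Beta-function computation shows $U\rho^{(1)}_\varepsilon\equiv 1$ exactly on $\mathsf{D}_\varepsilon$ when $\alpha<1$ (and, after truncating to $\mathbb{S}^{d-2}_\varepsilon(0,R)$ and normalising by $|\log\varepsilon|$, asymptotically $1$ when $\alpha=1$), the analogue of Lemma \ref{L1} kills the contribution of the slab outside a $\delta(\varepsilon)$-enlargement of $\mathsf{D}$, and then the excessive-property sandwich \eqref{bits}--\eqref{limsup} squeezes $\varepsilon^{\alpha-1}\mathbb{P}_x(\tau_{\mathsf{D}_\varepsilon}<\infty)$ (resp. $|\log\varepsilon|\,\mathbb{P}_x(\tau_{\mathsf{D}_\varepsilon}<\infty)$) between potentials whose limits are computed explicitly, yielding the constants in \eqref{gammarecursion2} and \eqref{gammarecursion2a=1}. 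If you want to salvage your decomposition-into-points strategy you would need, at minimum, a second-moment or equilibrium-measure argument to obtain the lower bound and an independent derivation of the thin-disc constant; as written, the proposal assumes the two facts that constitute the actual content of parts (i) and (ii).
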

%
Roughly speaking, Theorem \ref{main5}  are to be expected as, following the ideas of \cite{Luks} one may map $\mathbb{S}^{d-1}$ onto $\mathbb{H}^{d-1}$ via a standard sphere inversion  transformation, which, thanks to the Riesz--Bogdan--\.Zak transform, also transforms  the paths of the stable processes into that of a $h$-transformed stable processes; see \cite{BZ}. The proofs we have given below, however, are direct nonetheless, following similar steps to those of Theorems \ref{main}, \ref{main2} and \ref{Naga}, as well as Corollary \ref{main3}.

\section{Heuristic for the proof of Theorem \ref{main2}}
Let us begin with a sketch of the proof of Theorem \ref{main2}. We start by recalling an identity that is known in quite a general setting from the potential analysis literature; see for example Section 13.11 of \cite{CW} and Section VI.2 of \cite{BGbook}.  Suppose that $A$ is a bounded closed set and let 
 $\tau_A = \inf\{t>0: X_t \in A\}$. Let $\mu_A$ be a finite measure supported on $A$, which is absolutely continuous with respect to Lebesgue measure and define its potential by
\[
U\mu_A(x) : = \int_{A} |x-y|^{\alpha - d}\mu_A(\dd y) \qquad x\in \mathbb{R}^d, 
\]
On account of the fact that $\mu_A$ is absolutely continuous, recalling that $|x|^{\alpha-d}$ is the potential of the stable process issued from the origin, stationary and independent increments allows us to identify we can identity 
\[
U\mu_A(x) = \int_{A} |x-y|^{\alpha - d}m_A( y)\ell_d(\dd y)=\mathbb{E}_x\left[\int_0^\infty m_A(X_t)\dd t\right], \qquad x\notin A,
\]
where $m_A$ is the density of $\mu_A$ with respect to Lebesgue measure, $\ell_d$. 
As the support of $\mu_A$ is precisely $A$, we must have $m_A(y) = 0$ for all $y\notin A$. As such, the Strong Markov Property tells us that
\begin{equation}
  U\mu_A(x) =\mathbb{E}_x\left[\mathbf{1}_{\{\tau_{A} < \infty\}}\int_{\tau_A}^\infty m_A(X_t)\dd t \right]
  =\mathbb{E}_x \left[U{\mu}_A (X_{\tau_A}) \mathbf{1}_{\{\tau_{A} < \infty\}}\right], \qquad x\notin A.
  \label{preuse}
\end{equation}
Note, the above equality is also true when $x\in A$ as, in that case,  $\tau_A = 0$.
\smallskip

Replacing $\tau_A$ by a general stopping time $\tau$ in the above calculation changes the first equality in \eqref{preuse} to an inequality, thus giving the excessive property 
\begin{equation}
\label{excessive}
 U\mu_A(x) \geq \mathbb{E}_x \left[U{\mu}_A (X_{\tau}) \mathbf{1}_{\{\tau < \infty\}}\right], \qquad x\in \mathbb{R}^d.
 \end{equation} This family of inequalities together with the Strong Markov Property easily gives us the classical result that $(U\mu_A(X_t), t\geq 0)$ is a supermartingale.  

\smallskip

Let us now suppose that $\mu$ can be constructed in such a away that it is supported on $A$ such that, for all $x\in A$,  $U\mu(x) = 1$. We then recover 
the corollary  to Theorem 1 in Chapter 5 of \cite{CW}, see also equation (21) in the same chapter, which states that 
\[
\mathbb{P}_x(\tau_A<\infty) = U\mu(x), \qquad x\not\in A.
\]

Returning to the problem at hand, we can use the principals above to develop a 
a `guess and verify' approach to the proof, in particular, since we are not chasing an exact formula for $\mathbb{P}_x(\tau_{\mathsf{S}_\varepsilon}<\infty) $, but rather the  asymptotic leading order behaviour. Indeed, suppose we can `guess' a measure, say $\mu_\varepsilon$, supported on $\mathsf{S}_\varepsilon$, such that
\begin{equation}
U\mu^{\mathsf{S}}_\varepsilon (x) = 1+o(1), \qquad x\in \mathsf{S}_\varepsilon \text{ as }\varepsilon\to 0,
\label{o(1)}
\end{equation}
so that 
\begin{equation}
(1+o(1))\mathbb{P}_x(\tau_{\mathsf{S}_\varepsilon} <\infty) = U\mu^{\mathsf{S}}_\varepsilon (x), \qquad x\not\in \mathsf{S}_\varepsilon,
\label{o(1)2}
\end{equation}
then this would be a good basis from which to draw out the the leading order decay in $\varepsilon$, especially if our guess of $\mu_\varepsilon$ is such that $U\mu_\varepsilon$ is tractable.

\smallskip

In one dimension,  we know from Lemma 1 of \cite{PS}, that for a one-dimensional symmetric stable process, 
\begin{equation}
\int_{-1}^1 |x-y|^{\alpha - 1} (1-y)^{-\alpha/2}(1+y)^{-\alpha/2}\dd y = 1, \qquad x\in[-1,1].
\label{Simon}
\end{equation}
We can use this to build a reasonable choice of $\mu^{\mathsf{S}}_\varepsilon$. Indeed, writing $X = |X|\arg(X)$, 
 when $X$ begins in the neighbourhood of  $\mathsf{S}$, then $|X|$ begins in the neighbourhood of 1 and $\arg(X)$, essentially, from within $\mathsf{S}$. On short-time scales and short-range, the  time change $|X|$ behaves similarly to a one-dimensional stable process. Moreover, $\arg(X)$ is an isotropic process.
A reasonable guess for    $\mu^{\mathsf{S}}_\varepsilon$ would be to base it on the measure  
\begin{equation}
\mu_\varepsilon (\dd y) = c_{\alpha, d} (|y|-(1-\varepsilon))^{-\alpha/2} (1+\varepsilon-|y|)^{-\alpha/2} \ell_d(\dd y),
\label{mueps}
 \end{equation}
restricted to $\mathsf{S}_\varepsilon$, where we recall $\ell_d$ is $d$-dimensional Lebesgue measure and  $c_{\alpha, d}$ is a constant to be determined so that \eqref{o(1)} holds. As we will shortly see, when $\alpha \in(0,1)$, the constant $c_{\alpha, d}$ does not depend on $\varepsilon$, however, when $\alpha =1$, in order to respect \eqref{o(1)} we need to make it depend on $\varepsilon$.

\section{Proof of Theorem \ref{main2} (i)}
As alluded to in the previous section, we will work with the guess $\mu^{\mathsf{S}}_\varepsilon $ given by \eqref{mueps}. In order to show \eqref{o(1)}, we will take advantage of some of the symmetric features of $\mu_\varepsilon$, when seen as a measure over $\mathbb{S}^{d-1}_\varepsilon = \{x\in\mathbb{R}^{d} : 1-\varepsilon\leq |x|\leq 1+\varepsilon\}$. In particular, writing $\mu^{(1)}_\varepsilon$ as $\mu_\varepsilon$ restricted to $\mathbb{S}^{d-1}_\varepsilon$ and $\mu_\varepsilon^{(2)}$ as $\mu_\varepsilon$ restricted to $\hat{\mathsf{S}}_\varepsilon := \mathbb{S}^{d-1}_\varepsilon\backslash \mathsf{S}_\varepsilon$, we have the obvious difference.
\begin{equation}
U\mu^{\mathsf{S}}_\varepsilon(x) = U\mu^{(1)}_\varepsilon (x) - U\mu^{(2)}_\varepsilon(x) \qquad x\in \mathsf{S}_\varepsilon.
\label{differencee}
\end{equation}
Moreover, we would like to introduce 
\[
\mu_{\varepsilon,\delta}^{(2)}: =  \mu_\varepsilon|_{\hat{\mathsf{S}}_{\varepsilon}^{\delta}}
\]
where $\hat{\mathsf{S}}_{\varepsilon}^{\delta} =\mathbb{S}^{d-1}_\varepsilon \backslash {\mathsf{S}}_{\varepsilon}^{\delta}$ and 
\[
{\mathsf{S}}_{\varepsilon}^{\delta}:= \{x \in \mathbb{R}^d \colon 1-\varepsilon<|x|<1+\varepsilon  \text{ and } \arg(x)\in\mathsf{S}^\delta\},\text{ where  } \mathsf{S}^\delta = \{x\in\mathbb{S}^{d-1}:  \text{dist}(\arg(x),\mathsf{S}) <\delta \},
\]
 for some small $\delta>0$, which, in due course, will depend on $\varepsilon$. Note that, since $\mathsf{S}$ is closed, $\mathsf{S}^\delta$ (resp. ${\mathsf{S}}_{\varepsilon}^{\delta}$) shrinks to $\mathsf{S}$ (resp. ${\mathsf{S}}_\varepsilon$) when $\delta \rightarrow 0$. Then, we also have that 
\begin{equation}
U\mu^{\mathsf{S}^\delta}_\varepsilon(x) = U\mu^{(1)}_\varepsilon (x)  - U\mu^{(2)}_{\varepsilon,\delta} (x) \qquad x\in \mathsf{S}_\varepsilon.
\label{differencee2}
\end{equation}
The estimate \eqref{differencee2} will be useful for a certain lower bound that will give us what we need to prove Theorem \ref{main2}. We need to prove two technical   lemmas first. 
The first one deals with the term $U\mu^{(1)}_\varepsilon$.

\begin{lemma}\label{unif}
Suppose that we choose 
\[
c_{\alpha, d} =\frac{\Gamma((d+\alpha-2)/2)}{2^\alpha \pi^{d/2}\Gamma(1-\alpha) \Gamma((2-\alpha)/2)}.
\] 
Then,
\[
\lim_{\varepsilon\to0}\sup_{x\in\mathbb{S}^{d-1}_\varepsilon}|U\mu^{(1)}_\varepsilon(x) -1|=0
\]
as $\varepsilon\to0$.
\end{lemma}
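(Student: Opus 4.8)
\textbf{Proof proposal for Lemma \ref{unif}.}

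The plan is to compute $U\mu^{(1)}_\varepsilon(x)$ explicitly (up to vanishing error) by exploiting the radial--angular product structure of $\mu^{(1)}_\varepsilon$ on the shell $\mathbb{S}^{d-1}_\varepsilon$. Write $x = \rho\vartheta$ with $\rho = |x| \in [1-\varepsilon, 1+\varepsilon]$ and $\vartheta = \arg(x) \in \mathbb{S}^{d-1}$, and $y = r\sigma$ with $r\in[1-\varepsilon,1+\varepsilon]$, $\sigma\in\mathbb{S}^{d-1}$. In these coordinates
\[
U\mu^{(1)}_\varepsilon(x) = c_{\alpha,d}\int_{1-\varepsilon}^{1+\varepsilon}\! (r-(1-\varepsilon))^{-\alpha/2}(1+\varepsilon-r)^{-\alpha/2} r^{d-1}\!\left(\int_{\mathbb{S}^{d-1}}\!|\rho\vartheta - r\sigma|^{\alpha-d}\,\sigma_{\mathrm{surf}}(\dd\sigma)\right)\dd r,
\]
where $\sigma_{\mathrm{surf}}$ is unnormalised surface measure. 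By isotropy the inner integral depends only on $\rho$ and $r$; call it $G(\rho,r)$. The key classical fact is the Newton/Riesz-type closed form for the surface integral of $|{\cdot}|^{\alpha-d}$ over a sphere: for a sphere of radius $r$ and a point at distance $\rho$,
\[
G(\rho,r) = \omega_{d-1}\,r^{d-1}\cdot\begin{cases} \text{(expression in }\rho,r\text{)}\end{cases},
\]
which one derives by writing $|\rho\vartheta-r\sigma|^2 = \rho^2 + r^2 - 2\rho r\cos\psi$, substituting $u=\cos\psi$, and integrating $(\rho^2+r^2-2\rho r u)^{(\alpha-d)/2}$ against $(1-u^2)^{(d-3)/2}$; this is a standard Gegenbauer/hypergeometric integral. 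The essential point is that, to leading order as $\varepsilon\to 0$ with $\rho, r\to 1$, one has $|\rho\vartheta - r\sigma|^2 \approx (\rho-r)^2 + |\vartheta-\sigma|^2$, i.e. the shell locally looks like $\mathbb{R}\times\mathbb{R}^{d-1}$: the radial direction contributes a one-dimensional ``distance'' $|\rho-r|$ and the angular direction contributes a $(d-1)$-dimensional piece.

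Concretely, I would first carry out the angular integral exactly on the unit sphere and extract its small-$|\rho-r|$ asymptotics. When $\rho\ne r$ and both are near $1$, $G(\rho,r)$ behaves like $a_{d,\alpha}|\rho-r|^{\alpha-1}$ plus a bounded remainder, where $a_{d,\alpha}$ is an explicit constant of the form $\pi^{(d-1)/2}\Gamma((1-\alpha)/2)/\Gamma((d-\alpha)/2)$ or similar — this is exactly the statement that integrating $|\vartheta-\sigma|^{\alpha-d}$ over the tangent hyperplane $\mathbb{R}^{d-1}$ produces a power $|\rho-r|^{\alpha-1}$ after the remaining one-dimensional radial scale is accounted for. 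Substituting this into the radial integral reduces the computation to
\[
U\mu^{(1)}_\varepsilon(\rho\vartheta) \sim c_{\alpha,d}\,a_{d,\alpha}\int_{1-\varepsilon}^{1+\varepsilon}(r-(1-\varepsilon))^{-\alpha/2}(1+\varepsilon-r)^{-\alpha/2}|\rho - r|^{\alpha-1}\,\dd r.
\]
Rescaling $r = 1+\varepsilon s$, $\rho = 1+\varepsilon t$ with $s,t\in[-1,1]$ shows the $\varepsilon$-powers cancel: the integral becomes $\int_{-1}^1 (s+1)^{-\alpha/2}(1-s)^{-\alpha/2}|t-s|^{\alpha-1}\dd s$, which by the one-dimensional identity \eqref{Simon} (Lemma 1 of \cite{PS}) equals a constant independent of $t$, namely a multiple of the normalising constant appearing there. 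Choosing $c_{\alpha,d}$ as in the statement makes the product $c_{\alpha,d}a_{d,\alpha}\cdot(\text{const from }\eqref{Simon})$ equal to $1$, which is precisely how the stated value of $c_{\alpha,d}$ arises (one checks the $\Gamma$-factors match).

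The main obstacle — and the part requiring genuine care rather than bookkeeping — is making the ``local flattening'' rigorous \emph{uniformly} in $x\in\mathbb{S}^{d-1}_\varepsilon$, i.e. controlling the error terms. Three sources of error must be shown to vanish uniformly: (a) the difference between $|\rho\vartheta-r\sigma|^2$ and $(\rho-r)^2+|\vartheta-\sigma|^2$ (curvature of the sphere), (b) the difference between the exact angular integral $G(\rho,r)$ and its leading singular term $a_{d,\alpha}|\rho-r|^{\alpha-1}$, including the region where $|\rho - r|$ is comparable to $\varepsilon$ (so the ``singularity'' is not actually small) versus where $\vartheta$ and $\sigma$ are far apart on the sphere (where the integrand is bounded and contributes $O(\varepsilon^{1-\alpha})\cdot O(1)$ after the radial integration, which must be compared against the main term of order $1$), and (c) the factor $r^{d-1} = 1+O(\varepsilon)$. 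I expect (a) and (c) to be routine Taylor estimates, but (b) needs the hypergeometric/Gegenbauer representation of $G$ to be analysed carefully: one splits the angular integral into a neighbourhood $\{|\vartheta - \sigma| \le \sqrt\varepsilon\}$, where the flat approximation is used together with an explicit bound on the $\mathbb{R}^{d-1}$-integral of $((\rho-r)^2 + |w|^2)^{(\alpha-d)/2}$, and its complement, where $|\rho\vartheta - r\sigma|$ is bounded below so the integrand is $O(1)$ and the total contribution is $O(\varepsilon)$ after integrating the radial weight (whose total mass is $O(\varepsilon^{1-\alpha})$, times the $O(\varepsilon^{?})$ from the small angular region $\ldots$ — the precise accounting here is the delicate bit). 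Since the dominant contribution comes from $r$ within $O(\varepsilon)$ of $\rho$ and $\sigma$ within $O(\varepsilon)$ of $\vartheta$, the scaling is consistent and the limit is genuinely uniform; the role of the specific constant in \eqref{Simon} being independent of the interior point $t\in[-1,1]$ is what delivers uniformity in the radial variable, and isotropy delivers uniformity in the angular variable.
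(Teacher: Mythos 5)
Your strategy is essentially sound and runs parallel to the paper's proof of Lemma \ref{unif}: the same radial--angular decomposition, the same exact evaluation of the angular integral (the paper uses \eqref{3.665}), the same extraction of the $|\,|x|-r|^{\alpha-1}$ singularity via the hypergeometric connection formula \eqref{bigF}/\eqref{f1}, and the same observation that the bounded remainder contributes only $O(\varepsilon^{1-\alpha})$ after the radial integration (this is \eqref{use1}). The genuine difference is the radial step: the paper splits the radial integral at $r=|x|$, evaluates each half by \eqref{poly} as a ${_2}F_1$ in the variable $(|x|-1+\varepsilon)/(1+\varepsilon-|x|)$, and only recovers a constant independent of $|x|$ after recombining the two pieces through \eqref{linearcombo2}; you instead rescale and invoke the one-dimensional equilibrium identity \eqref{Simon}, whose independence of the interior point delivers uniformity in the radial variable at once. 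That shortcut is legitimate --- it is exactly how the paper itself handles the hyperplane case in Theorem \ref{main5}(i) --- and it buys you a cleaner argument with no hypergeometric combination identity. What it does not buy you is the uniform error control, your item (b): comparing the exact angular integral with its leading singular term uniformly over $x\in\mathbb{S}^{d-1}_\varepsilon$, including the regime $|\,|x|-r|\asymp\varepsilon$, is precisely where the paper spends its effort (\eqref{supto0} and its analogue for the outer integral), and your proposal leaves the ``precise accounting'' admittedly open. Since the supremum over $x$ is the entire content of the lemma, that part must be written out; the route you sketch (exact Gegenbauer/hypergeometric representation, near/far splitting, total mass of the radial weight $O(\varepsilon^{1-\alpha})$) does work.

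A concrete warning about the step you wave through with ``one checks the $\Gamma$-factors match'': if you carry out the bookkeeping along your route they do not match the constant in the statement. Your $a_{d,\alpha}=\pi^{(d-1)/2}\Gamma(\tfrac{1-\alpha}{2})/\Gamma(\tfrac{d-\alpha}{2})$ is correct, and the rescaled radial integral $\int_{-1}^{1}|t-s|^{\alpha-1}(1-s^{2})^{-\alpha/2}\,\dd s$ equals $\Gamma(\tfrac{\alpha}{2})\Gamma(\tfrac{2-\alpha}{2})$ for every $t\in[-1,1]$ (note that \eqref{Simon} as printed omits this normalising constant). Imposing $c_{\alpha,d}\,a_{d,\alpha}\,\Gamma(\tfrac{\alpha}{2})\Gamma(\tfrac{2-\alpha}{2})=1$ and using the duplication formula $\Gamma(\tfrac{1-\alpha}{2})\Gamma(\tfrac{2-\alpha}{2})=2^{\alpha}\sqrt{\pi}\,\Gamma(1-\alpha)$ gives
\[
c_{\alpha,d}=\frac{\Gamma((d-\alpha)/2)}{2^{\alpha}\pi^{d/2}\Gamma(1-\alpha)\Gamma(\alpha/2)},
\]
with $\Gamma((d-\alpha)/2)$ and $\Gamma(\alpha/2)$ where the lemma has $\Gamma((d+\alpha-2)/2)$ and $\Gamma((2-\alpha)/2)$. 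The discrepancy appears to originate in the paper rather than in your method: in passing from \eqref{supto0} to \eqref{add1} the coefficient of the singular term, $\Gamma(1-\alpha)/\bigl(\Gamma((d-\alpha)/2)\Gamma((2-\alpha)/2)\bigr)$ from \eqref{f1}, is replaced by $\Gamma(1-\alpha)/\bigl(\Gamma(\alpha/2)\Gamma((d+\alpha-2)/2)\bigr)$, i.e.\ by the denominator belonging to the non-singular term; redoing the paper's own computation with the correct coefficient reproduces the constant displayed above. So carry the constants through explicitly rather than asserting the match, and be prepared to find that the value of $c_{\alpha,d}$ in the statement (and hence the constant in \eqref{gammarecursion}) needs adjusting.
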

\begin{proof}
Appealing to \eqref{3.665}, we have, for $x\in\mathbb{S}^{d-1}_\varepsilon$,
\begin{align}
   &U\mu_\varepsilon^{(1)} (x) \notag\\
   &= c_{\alpha, d}\int_{\mathbb{S}_\varepsilon^{d-1}} |x-y|^{\alpha-d} (|y|-(1-\varepsilon))^{-\alpha/2} (1+\varepsilon-|y|)^{-\alpha/2} \ell_d(\dd y) \notag \\
    &=  \frac{2c_{\alpha, d}\pi^{(d-1)/2}}{\Gamma((d-1)/2)} \int_{1-\varepsilon}^{1+\varepsilon} \frac{r^{d-1} }{(r-(1-\varepsilon))^{\alpha/2} (1+\varepsilon-r)^{\alpha/2}} \dd r \int_0^\pi \frac{\sin^{d-2}\theta \dd\theta}{(|x|^2-2|x|r\cos\theta+r^2)^{(d-\alpha)/{2}}} \notag \\
     &= \frac{2c_{\alpha, d}\pi^{d/2}}{\Gamma(d/2)} |x|^{\alpha-d} \int_{1-\varepsilon}^{|x|} 
     \frac{ {_2}F_1 \Big(\frac{d-\alpha}{2}, 1-\frac{\alpha}{2}; \frac{d}{2}; ({r}/{|x|})^2\Big)r^{d-1} }{(r-(1-\varepsilon))^{\alpha/2} (1+\varepsilon-r)^{\alpha/2} }  \dd r \notag \\
    &\hspace{4cm}+  \frac{2c_{\alpha, d}\pi^{d/2}}{\Gamma(d/2)} \int_{|x|}^{1+\varepsilon} \frac{ {_2}F_1 \Big(\frac{d-\alpha}{2}, 1-\frac{\alpha}{2}; \frac{d}{2}; ({|x|}/{r})^2\Big) r^{\alpha-1} } {(r-(1-\varepsilon))^{\alpha/2} (1+\varepsilon-r)^{\alpha/2} }  \dd r. \label{firststep}
 \end{align}
 With a simple change of variables we can reduce this more simply to 
 \begin{align}
U\mu_\varepsilon^{(1)} (x) &=  \frac{2c_{\alpha, d}\pi^{d/2}}{\Gamma(d/2)} \int_{\frac{1-\varepsilon}{|x|}}^{1} 
     \frac{
     {_2}F_1 \Big(\frac{d-\alpha}{2}, 1-\frac{\alpha}{2}; \frac{d}{2}; r^2\Big)  r^{d-1} 
     }{
      \Big(r-\frac{1-\varepsilon}{|x|}\Big)^{\alpha/2} \Big(\frac{1+\varepsilon}{|x|}-r\Big)^{\alpha/2}
     }
     \dd r
      \notag \\
    &\hspace{4cm}+ \frac{2c_{\alpha, d}\pi^{d/2}}{\Gamma(d/2)}\int_{1}^{\frac{1+\varepsilon}{|x|}} 
     \frac{
      {_2}F_1 \Big(\frac{d-\alpha}{2}, 1-\frac{\alpha}{2}; \frac{d}{2}; r^{-2}\Big) r^{\alpha-1} 
     }{
      \Big(r-\frac{1-\varepsilon}{|x|}\Big)^{\alpha/2} \Big(\frac{1+\varepsilon}{|x|}-r\Big)^{\alpha/2}
     }
     \dd r.    \label{simplify}
   \end{align}
For the first term on the right-hand side of \eqref{simplify}, we can appeal to \eqref{bigF} and \eqref{f1} to deduce that 
\begin{align}
&\lim_{\varepsilon\to0}\sup_{x\in\mathbb{S}^{d-1}_\varepsilon}\Bigg|\frac{2c_{\alpha, d}\pi^{d/2}}{\Gamma(d/2)}\int_{\frac{1-\varepsilon}{|x|}}^{1} 
     \frac{
     {_2}F_1 \Big(\frac{d-\alpha}{2}, 1-\frac{\alpha}{2}; \frac{d}{2}; r^2\Big)  r^{d-1} 
     }{
      \Big(r-\frac{1-\varepsilon}{|x|}\Big)^{\alpha/2} \Big(\frac{1+\varepsilon}{|x|}-r\Big)^{\alpha/2}
     }
     \dd r\notag\\
     & \hspace{2cm} -
  \frac{2c_{\alpha, d}\pi^{d/2}\Gamma(1-\alpha)}{\Gamma((d-\alpha)/2) \Gamma((2-{\alpha})/{2})}  \int_{\frac{1-\varepsilon}{|x|}}^{1} 
     \frac{
     (1-r^2)^{\alpha-1} r^{d-1} 
     }{
      \Big(r-\frac{1-\varepsilon}{|x|}\Big)^{\alpha/2} \Big(\frac{1+\varepsilon}{|x|}-r\Big)^{\alpha/2}
     }
     \dd r\notag\\
&   \hspace{4cm}  -
     \frac{2c_{\alpha, d}\pi^{d/2}\Gamma(1-\alpha)}{\Gamma({\alpha}/{2})\Gamma((d+\alpha-2)/2)}   \int_{\frac{1-\varepsilon}{|x|}}^{1} 
     \frac{
     r^{d-1} 
     }{
      \Big(r-\frac{1-\varepsilon}{|x|}\Big)^{\alpha/2} \Big(\frac{1+\varepsilon}{|x|}-r\Big)^{\alpha/2}
     }\dd r\Bigg|=0.
     \label{supto0}
\end{align}
Note that, by using the transformation $r = (1-\varepsilon + 2\varepsilon u)/|x|$, 
\begin{align}
&\int_{\frac{1-\varepsilon}{|x|}}^{1} 
     r^{d-1} 
      \Big(r-\frac{1-\varepsilon}{|x|}\Big)^{-\alpha/2} \Big(\frac{1+\varepsilon}{|x|}-r\Big)^{-\alpha/2}
      \dd r\notag\\
      & \hspace{2cm}= |x|^{\alpha-d }(2\varepsilon)^{1-\alpha} \int_0^{(|x|-1+\varepsilon)/2\varepsilon} (2\varepsilon u+1-\varepsilon)^{d-1} u^{-\alpha/2}(1-u)^{-\alpha/2}\dd u\notag\\
      &\hspace{2cm}\leq |x|^{\alpha-d }(2\varepsilon)^{1-\alpha} \frac{\Gamma((2-\alpha)/2)^2}{\Gamma(2-\alpha)},
      \label{use1}
\end{align}
which tends to zero uniformly in $x\in\mathbb{S}_\varepsilon^{d-1}$ as $\varepsilon\to0$. 
\smallskip

The asymptotic \eqref{use1} also tells us that the approximating term of interest in \eqref{supto0} is  the middle term. For that, we can use \eqref{poly} to observe
\begin{align}
&\lim_{\varepsilon\to0}\sup_{x\in\mathsf{S}_\varepsilon}
\Bigg|
 \int_{\frac{1-\varepsilon}{|x|}}^{1} 
     (1-r^2)^{\alpha-1} r^{d-1} 
      \Big(r-\frac{1-\varepsilon}{|x|}\Big)^{-\alpha/2} \Big(\frac{1+\varepsilon}{|x|}-r\Big)^{-\alpha/2}
     \dd r\notag\\
     &\hspace{4cm}
     -2^{\alpha-1} \int_{\frac{1-\varepsilon}{|x|}}^{1} 
     (1-r)^{\alpha-1} 
      \Big(r-\frac{1-\varepsilon}{|x|}\Big)^{-\alpha/2} \Big(\frac{1+\varepsilon}{|x|}-r\Big)^{-\alpha/2}
     \dd r
     \Bigg|=0
 \label{preadd1}
\end{align}
and 
\begin{align}
&      \frac{2^\alpha c_{\alpha, d}\pi^{d/2}\Gamma(1-\alpha)}{\Gamma({\alpha}/{2})\Gamma((d+\alpha-2)/2)}  \int_{\frac{1-\varepsilon}{|x|}}^{1} 
     (1-r)^{\alpha-1} 
      \Big(r-\frac{1-\varepsilon}{|x|}\Big)^{-\alpha/2} \Big(\frac{1+\varepsilon}{|x|}-r\Big)^{-\alpha/2}
     \dd r\notag\\
     &=\frac{2^\alpha c_{\alpha, d}\pi^{d/2}\Gamma(1-\alpha)}{\Gamma({\alpha}/{2})\Gamma((d+\alpha-2)/2)}\int^{1-\frac{1-\varepsilon}{|x|}}_0 
     u^{\alpha-1} 
      \Big(1-\frac{1-\varepsilon}{|x|}-u\Big)^{-\alpha/2} \Big(\frac{1+\varepsilon}{|x|}-1+u\Big)^{-\alpha/2}
     \dd r\notag\\
     &=\frac{2^\alpha c_{\alpha, d}\pi^{d/2}\Gamma(1-\alpha) \Gamma((2-\alpha)/2)\Gamma(\alpha)}
     {\Gamma({\alpha}/{2})\Gamma((d+\alpha-2)/2)\Gamma((2+\alpha)/2)}
     \left(\frac{|x|-1+\varepsilon}{1+\varepsilon-|x|}\right)^{\alpha/2 }
\notag\\
&\hspace{6cm}         {_2}F_1 \Big(\alpha/2,\alpha;1+\alpha/2; -\frac{|x|-1+\varepsilon}{1+\varepsilon-|x|}\Big).
\label{add1}
\end{align}

The second term on the right-hand side of \eqref{simplify} can be dealt with similarly. Indeed, using \eqref{f1} we can  produce an analogous statement to \eqref{supto0}, from which, the leading order approximating term is the integral 
\begin{align}
& \frac{2c_{\alpha, d}\pi^{d/2}\Gamma(1-\alpha)}{\Gamma({\alpha}/{2})\Gamma((d+\alpha-2)/2)}  \int_{1}^{\frac{1+\varepsilon}{|x|}} 
     (1-r^{-2})^{\alpha-1} r^{d-1} 
      \Big(r-\frac{1-\varepsilon}{|x|}\Big)^{-\alpha/2} \Big(\frac{1+\varepsilon}{|x|}-r\Big)^{-\alpha/2}
     \dd r\notag\\
     &\sim  \frac{2^\alpha c_{\alpha, d}\pi^{d/2}\Gamma(1-\alpha)}{\Gamma({\alpha}/{2})\Gamma((d+\alpha-2)/2)}
    \int_{1}^{\frac{1+\varepsilon}{|x|}} 
     (r-1)^{\alpha-1}
      \Big(r-\frac{1-\varepsilon}{|x|}\Big)^{-\alpha/2} \Big(\frac{1+\varepsilon}{|x|}-r\Big)^{-\alpha/2}
     \dd r\notag\\
     &= \frac{2^\alpha c_{\alpha, d}\pi^{d/2}\Gamma(1-\alpha)}{\Gamma({\alpha}/{2})\Gamma((d+\alpha-2)/2)}
    \int_{0}^{\frac{1+\varepsilon}{|x|} -1} 
     u^{\alpha-1}
      \Big(u+1-\frac{1-\varepsilon}{|x|}\Big)^{-\alpha/2} \Big(\frac{1+\varepsilon}{|x|}-1-u\Big)^{-\alpha/2}
     \dd u\notag\\
     &=\frac{2^\alpha c_{\alpha, d}\pi^{d/2}\Gamma(1-\alpha) \Gamma((2-\alpha)/2)\Gamma(\alpha)}
     {\Gamma({\alpha}/{2})\Gamma((d+\alpha-2)/2)\Gamma((2+\alpha)/2)}
\left(\frac{1+\varepsilon- |x|}{|x|-1+\varepsilon}\right)^{\alpha/2}
 \notag\\
&\hspace{6cm}      {_2}F_1 \Big(\alpha/2,\alpha;1+\alpha/2; -\frac{1+\varepsilon- |x|}{|x| -1+\varepsilon}\Big)
\label{add2}
\end{align}
uniformly for $x\in \mathbb{S}^{d-1}_\varepsilon$ as $\varepsilon\to0$, where we have again used \eqref{poly} to develop the right-hand side.

Somewhat remarkably, if we add together the right-hand side of \eqref{add1} and \eqref{add2}, using the identity in \eqref{linearcombo2}, we see that the sum is equal to 
\begin{equation}
 \frac{2^\alpha c_{\alpha, d}\pi^{d/2}\Gamma(1-\alpha) \Gamma((2-\alpha)/2)}
     {\Gamma((d+\alpha-2)/2)} =1
\label{nicesum}
\end{equation}
where the equality with unity follows from the choice of  $c_{\alpha, d}$  in the statement of the lemma.

\smallskip

Piecing together then uniform estimates above as well as the simplification of the two integrals \eqref{add1} and \eqref{add2} as well as the decay of the term \eqref{use1} in \eqref{supto0} and the analogous term when dealing with the second term on the right-hand side of \eqref{simplify}, the statement of the lemma follows.
 \end{proof}

Next we deal with the term $U\mu^{(2)}_{\varepsilon,\delta}$.

\begin{lemma}\label{L1}Recalling that $c_{\alpha, d}$ is the constant given in Lemma \ref{unif}, take $\delta(\varepsilon) = \varepsilon^{(1-\alpha)/2(d-\alpha)}$, then 
\[
\limsup_{\varepsilon\to0} \sup_{x\in\mathsf{S}_\varepsilon}\varepsilon^{(\alpha-1)/2} U\mu_{\varepsilon,\delta(\varepsilon)}^{(2)} (x)\le C_{\alpha,d},
\]
where
\[
C_{\alpha, d} = c_{\alpha, d} \frac{ 2^{2-\alpha}\pi^{(d-1)/2}\Gamma((2-\alpha)/2)^2}{ \Gamma(2-\alpha)\Gamma((d-1)/2)}. 
\]
In particular 
\[
\lim_{\varepsilon\to0} \sup_{x\in\mathsf{S}_\varepsilon} U\mu_{\varepsilon,\delta(\varepsilon)}^{(2)} (x)=0.
\]
\end{lemma}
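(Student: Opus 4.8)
The plan is to bound $U\mu_{\varepsilon,\delta(\varepsilon)}^{(2)}(x)$ for $x\in\mathsf{S}_\varepsilon$ by exploiting the fact that the support $\hat{\mathsf{S}}_\varepsilon^{\delta}$ of $\mu^{(2)}_{\varepsilon,\delta}$ keeps its \emph{angular} coordinate at distance at least $\delta$ from $\mathsf{S}$, while $x$ has angular coordinate in $\mathsf{S}$; hence for any $y$ in the support, $|x-y|$ is bounded below by a quantity comparable to $\delta$ (more precisely, $|\arg(x)-\arg(y)|\ge\delta$ forces $|x-y|\ge c\,\delta$ for $\varepsilon$ small, since the radial parts of both $x$ and $y$ lie in $[1-\varepsilon,1+\varepsilon]$). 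First I would make this geometric lower bound precise: for $x\in\mathsf{S}_\varepsilon$ and $y\in\hat{\mathsf{S}}_\varepsilon^{\delta}$, one has $|x-y|\ge \tfrac12\delta$ once $\varepsilon$ is small enough relative to $\delta$, using the law of cosines on the sphere together with $\delta(\varepsilon)\to 0$ slower than $\varepsilon$.

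Next I would use this to replace $|x-y|^{\alpha-d}$ by $(\delta/2)^{\alpha-d}$ in the integrand (recall $\alpha-d<0$), giving
\[
U\mu_{\varepsilon,\delta}^{(2)}(x)\le (\delta/2)^{\alpha-d}\,\mu^{(2)}_{\varepsilon,\delta}\bigl(\hat{\mathsf{S}}_\varepsilon^{\delta}\bigr)\le (\delta/2)^{\alpha-d}\,\mu^{(1)}_\varepsilon\bigl(\mathbb{S}^{d-1}_\varepsilon\bigr).
\]
Then I would compute the total mass $\mu^{(1)}_\varepsilon(\mathbb{S}^{d-1}_\varepsilon)$ directly: in polar coordinates it factorises as the surface area $2\pi^{(d-1)/2}/\Gamma((d-1)/2)$ of $\mathbb{S}^{d-1}$ times the radial integral $c_{\alpha,d}\int_{1-\varepsilon}^{1+\varepsilon} r^{d-1}(r-(1-\varepsilon))^{-\alpha/2}(1+\varepsilon-r)^{-\alpha/2}\,\dd r$. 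Substituting $r = 1-\varepsilon+2\varepsilon u$ and using $r^{d-1}=1+O(\varepsilon)$ uniformly, this radial integral equals $c_{\alpha,d}(2\varepsilon)^{1-\alpha}\int_0^1 u^{-\alpha/2}(1-u)^{-\alpha/2}\,\dd u\,(1+o(1)) = c_{\alpha,d}(2\varepsilon)^{1-\alpha}\tfrac{\Gamma((2-\alpha)/2)^2}{\Gamma(2-\alpha)}(1+o(1))$, the Beta integral being exactly the bound already used in \eqref{use1}. Hence $\mu^{(1)}_\varepsilon(\mathbb{S}^{d-1}_\varepsilon) = c_{\alpha,d}\,\tfrac{2^{2-\alpha}\pi^{(d-1)/2}\Gamma((2-\alpha)/2)^2}{\Gamma(2-\alpha)\Gamma((d-1)/2)}\,\varepsilon^{1-\alpha}(1+o(1))$.

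Combining, $U\mu^{(2)}_{\varepsilon,\delta}(x)\le C_{\alpha,d}\,\delta^{\alpha-d}\varepsilon^{1-\alpha}(1+o(1))$ uniformly in $x\in\mathsf{S}_\varepsilon$, up to the factor $2^{d-\alpha}$ that I would absorb or track carefully. Now the choice $\delta(\varepsilon)=\varepsilon^{(1-\alpha)/2(d-\alpha)}$ gives $\delta^{\alpha-d}=\varepsilon^{-(1-\alpha)/2}$, so $\delta^{\alpha-d}\varepsilon^{1-\alpha}=\varepsilon^{(1-\alpha)/2}$, and therefore $\varepsilon^{(\alpha-1)/2}U\mu^{(2)}_{\varepsilon,\delta(\varepsilon)}(x)\le C_{\alpha,d}(1+o(1))$, which is the claimed $\limsup$. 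Since $(1-\alpha)/2>0$ when $\alpha\in(0,1)$, multiplying by $\varepsilon^{(1-\alpha)/2}\to 0$ yields $\sup_{x\in\mathsf{S}_\varepsilon}U\mu^{(2)}_{\varepsilon,\delta(\varepsilon)}(x)\to 0$, giving the ``in particular'' statement. The main obstacle is getting the geometric lower bound on $|x-y|$ genuinely uniform in $x\in\mathsf{S}_\varepsilon$ and $y\in\hat{\mathsf{S}}_\varepsilon^{\delta}$ with the correct constant — one must be careful that the constant relating angular separation $\delta$ to Euclidean separation does not degrade as $\varepsilon\to 0$, and that the $r^{d-1}$ factor in the mass computation is controlled uniformly; everything else is the routine Beta-integral asymptotic already appearing in Lemma \ref{unif}.
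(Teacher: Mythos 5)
Your proposal is essentially the paper's own proof: bound $|x-y|$ from below by (a quantity comparable to) $\delta$ on the support of $\mu^{(2)}_{\varepsilon,\delta}$, pull $|x-y|^{\alpha-d}$ out of the integral, bound by the total mass of $\mu^{(1)}_\varepsilon$ computed via the Beta integral as in \eqref{2ndthing}, and then use $\delta(\varepsilon)^{\alpha-d}\varepsilon^{1-\alpha}=\varepsilon^{(1-\alpha)/2}$. The only refinement needed is the one you already flag: since $\varepsilon=o(\delta(\varepsilon))$, the geometric bound can be taken as $|x-y|\ge\delta(1-o(1))$ rather than $\delta/2$, which removes the spurious factor $2^{d-\alpha}$ and yields the stated constant $C_{\alpha,d}$ exactly (and in any case the ``in particular'' limit, which is what is used later, is unaffected).
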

\begin{proof}
Since $x \in \mathsf{S}_\varepsilon$ and $y \in \hat{\mathsf{S}}_{\varepsilon}^{\delta}$, i.e. $|x-y|>\delta$, we have,
\begin{align}
   \sup_{x\in\mathsf{S}_\varepsilon} U\mu_{\varepsilon,\delta}^{(2)} (x) &= \int_{ \hat{\mathsf{S}}_{\varepsilon}^{\delta}} 
   \frac{1}{|x-y|^{d-\alpha}}\mu_\varepsilon (\dd y) \notag\\
    &\le \frac{1}{\delta^{d-\alpha}} \int_{\hat{\mathsf{S}}_{\varepsilon}^{\delta}} \mu_\varepsilon (\dd y)  \notag \\
    &\leq  \frac{1}{\delta^{d-\alpha}} \frac{2\pi^{(d-1)/2}}{\Gamma((d-1)/2)} \int_{1-\varepsilon}^{1+\varepsilon} r^{d-1}m_\varepsilon (r)  \dd r 
    \label{use3}
\end{align}
where $m_\varepsilon(r) = c_{\alpha, d} (r-(1-\varepsilon))^{-\alpha/2} (1+\varepsilon-r)^{-\alpha/2} $.
It is easy to see that 
\begin{align}
\int_{1-\varepsilon}^{1+\varepsilon} m_\varepsilon (r)  \dd r 
&=c_{\alpha, d} \int_{1-\varepsilon}^{1+\varepsilon} (r-(1-\varepsilon))^{-\alpha/2} (1+\varepsilon-r)^{-\alpha/2} \dd r\notag\\
&=c_{\alpha, d} \varepsilon^{1-\alpha}2^{1-\alpha}\frac{ \Gamma((2-\alpha)/2)^2}{ \Gamma(2-\alpha)} .
\label{2ndthing}
\end{align}
Putting \eqref{use3} and \eqref{2ndthing} we have
\begin{align}
   \sup_{x\in\mathsf{S}_\varepsilon} U\mu_{\varepsilon,\delta}^{(2)} (x) &\leq 
  c_{\alpha, d}  \frac{ 2^{2-\alpha}\pi^{(d-1)/2}\Gamma((2-\alpha)/2)^2}{ \Gamma(2-\alpha)\Gamma((d-1)/2)} 
  \times \frac{\varepsilon^{1-\alpha}}{\delta^{d-\alpha}}.
    \label{use4}
\end{align}
By choosing $\delta = \delta(\varepsilon)$, the result follows. 
\end{proof}

Let us now return to the proof of Theorem \ref{main2}. We show that we can make careful sense of  \eqref{o(1)} and \eqref{o(1)2}. 
 Using 
  \eqref{differencee} in \eqref{preuse} we, for $x\not\in\mathsf{S}$,
\begin{align}
U\mu^{\mathsf{S}}_\varepsilon(x) &
= \mathbb{E}_x\left[(U\mu^{(1)}_\varepsilon (X_{\tau_{\mathsf{S}_\varepsilon}}) -1) ; \tau_{\mathsf{S}_\varepsilon}<\infty\right] +  \mathbb{P}_x(\tau_{\mathsf{S}_\varepsilon}<\infty) -  \mathbb{E}_x\left[U\mu^{(2)}_\varepsilon (X_{\tau_{\mathsf{S}_\varepsilon}})  ; \tau_{\mathsf{S}_\varepsilon}<\infty\right] \notag \\
&\leq \mathbb{E}_x\left[(U\mu^{(1)}_\varepsilon (X_{\tau_{\mathsf{S}_\varepsilon}}) -1) ; \tau_{\mathsf{S}_\varepsilon}<\infty\right] +  \mathbb{P}_x(\tau_{\mathsf{S}_\varepsilon}<\infty). 
\label{bits}
\end{align}
Then, due to Lemmas 
\ref{unif}, for each $x\not\in \mathsf{S}$ and $\upsilon>0$, we can choose $\varepsilon$ sufficiently small such that 
\begin{align}
U\mu^{\mathsf{S}}_\varepsilon(x) \leq (1+\upsilon)\mathbb{P}_x(\tau_{\mathsf{S}_\varepsilon}<\infty) .
\label{lowerbound}
\end{align}
Since we can take $\upsilon$ arbitrarily small, we have the lower bound on a liminf version of the statement of Theorem \ref{main2}  given by 
\begin{align}
\liminf_{\varepsilon\to 0}\varepsilon^{\alpha-1}U\mu^{\mathsf{S}}_\varepsilon(x) &\le  \liminf_{\varepsilon \to 0}  \varepsilon^{\alpha-1}\mathbb{P}_x(\tau_{\mathsf{S}_\varepsilon}<\infty), \qquad x\not\in\mathsf{S}.
\label{liminf}
\end{align}

On the other hand, suppose instead of $\mathsf{S}$, we replace its role by $\mathsf{S}^{\delta(\varepsilon)}$, where $\delta(\varepsilon)$ was given in the statement of Lemma \ref{L1}, we have from the excessive property \eqref{excessive} associated to $U\mu^{\mathsf{S}^{\delta(\varepsilon)}}_\varepsilon$ that
\begin{align}
U\mu^{\mathsf{S}^{\delta(\varepsilon)}}_\varepsilon(x)
 &\geq \mathbb{E}_x\left[U\mu^{\mathsf{S}^{\delta(\varepsilon)}}_\varepsilon (X_{\tau_{\mathsf{S}_{\varepsilon}}})  ; \tau_{\mathsf{S}_\varepsilon}<\infty\right]
 \label{excessiveprop}
\end{align}
where we can choose $\varepsilon$ sufficiently small that the identity holds for all $x\not\in\mathsf{S}$. Now appealing to \eqref{differencee2}, we get 
\begin{align*}
U\mu^{\mathsf{S}^{\delta(\varepsilon)}}_\varepsilon(x) 
 &\geq\mathbb{E}_x\left[U\mu^{(1)}_\varepsilon (X_{\tau_{\mathsf{S}_{\varepsilon}}}) -1 ; \tau_{\mathsf{S}_\varepsilon}<\infty\right]+ \mathbb{P}_x(\tau_{\mathsf{S}_\varepsilon}<\infty)-\mathbb{E}_x\left[U\mu^{(2)}_{\varepsilon,\delta(\varepsilon)} (X_{\tau_{\mathsf{S}_\varepsilon}})  ; \tau_{\mathsf{S}_\varepsilon}<\infty\right] .
\end{align*}
Appealing to Lemmas \ref{L1} and \ref{unif}, for each $\upsilon>0$, we can choose $\varepsilon$ small enough such that, for each $x\not\in\mathsf{S}$,
\begin{align}
U\mu^{\mathsf{S}^{\delta(\varepsilon)}}_\varepsilon(x) &\geq 
 (1-\upsilon )\mathbb{P}_x(\tau_{\mathsf{S}_\varepsilon}<\infty), 
 \label{upperbound}
\end{align}
Hence, since we can choose $\upsilon$ as small as we like, we have 
\begin{equation}
\limsup_{\varepsilon\to 0}\varepsilon^{\alpha-1} U\mu^{\mathsf{S}^{\delta(\varepsilon)}}_\varepsilon(x) 
\geq  \limsup_{\varepsilon \to 0}  \varepsilon^{\alpha-1}\mathbb{P}_x(\tau_{\mathsf{S}_\varepsilon}<\infty), \qquad x\not\in\mathsf{S}.
\label{limsup}
\end{equation}

It follows from \eqref{liminf} and \eqref{limsup} that, as soon as 
\begin{equation}
\limsup_{\varepsilon\to 0}\varepsilon^{\alpha-1} U\mu^{\mathsf{S}^{\delta(\varepsilon)}}_\varepsilon(x) 
=\liminf_{\varepsilon\to 0}\varepsilon^{\alpha-1}U\mu^{\mathsf{S}}_\varepsilon(x),\qquad x\not\in\mathsf{S},
\label{limit}
\end{equation}
Noting that $U\mu^\mathsf{S}_\varepsilon \leq U\mu^{\mathsf{S}^{\delta(\varepsilon)}}_\varepsilon$, we have 
\[
\lim_{\varepsilon\to 0}\varepsilon^{\alpha-1}U\mu^{\mathsf{S}}_\varepsilon(x) 
= \lim_{\varepsilon \to 0}  \varepsilon^{\alpha-1}\mathbb{P}_x(\tau_{\mathsf{S}_\varepsilon}<\infty), \qquad x\not\in\mathsf{S}.
\]
Let us thus complete the proof by verifying the limit on the equality  \eqref{limit} holds. 

\smallskip

To this end, using that  $|x-y|^{\alpha-d}$ is continuous on $\mathsf{S}_\varepsilon$ and, when $x\not\in\mathsf{S}$, without less of generality, we can take $\varepsilon$ small enough so that  $x \notin \mathsf{S}_\varepsilon$.
For each $x\notin \mathsf{S}$, using the Mean Valued Theorem, there exists a  $r^*_\varepsilon \in (1-\varepsilon,1+\varepsilon)$ such that 
\begin{align}
U\mu^{\mathsf{S}}_\varepsilon(x) 
&=  \int_{\mathsf{S}_\varepsilon} |x-y|^{\alpha-d} m_\varepsilon(|y|)\ell_d(\dd y) \notag \\
&= (r_\varepsilon^*)^{d-1}\int_{\mathsf{S}} |x-r_\varepsilon^*\theta|^{\alpha-d} \sigma_1(\dd\theta)\int_{1-\varepsilon}^{1+\varepsilon}m_\varepsilon(r)\dd r
, 
\label{usefora=1}
\end{align}
where we recall that $m_\varepsilon(r) = c_{\alpha, d} (r-(1-\varepsilon))^{-\alpha/2} (1+\varepsilon-r)^{-\alpha/2}$. By using  \eqref{2ndthing} we get 
\begin{align}
   \varepsilon^{\alpha -1}U\mu^{\mathsf{S}}_\varepsilon(x) 
 = (r_\varepsilon^*)^{d-1} 2^{1-\alpha}c_{\alpha, d}\frac{ \Gamma((2-\alpha)/2)^2}{ \Gamma(2-\alpha)}
   \int_{\mathsf{S}} |x-r_\varepsilon^*\theta|^{\alpha-d} \sigma_1(\dd\theta), \qquad x\notin\mathsf{S}.
   \label{rstar}
\end{align}

Taking limits in \eqref{rstar} as $\varepsilon\to0$ and recalling the value of $c_{\alpha, d}$ from the statement of Lemma \ref{unif}, we have, for $x\not\in \mathsf{S}$
\begin{align}
\lim_{\varepsilon\to0}\varepsilon^{\alpha -1}U\mu^{\mathsf{S}}_\varepsilon(x)
&=2^{1-2\alpha}\frac{\Gamma((d+\alpha-2)/2)}{ \pi^{d/2}\Gamma(1-\alpha) }\frac{ \Gamma((2-\alpha)/2)}{ \Gamma(2-\alpha)}
   \int_{\mathsf{S}} |x-\theta|^{\alpha-d} \sigma_1(\dd\theta).
   \label{almost}
\end{align}
An application of the recursion formula for gamma functions allows us to identify the right-hand side as equal to that of 
the right-hand side of \eqref{gammarecursion}. 
Very little changes in the above calculation if we replace $\mathsf{S}$ by $\mathsf{S}^{\delta(\varepsilon)}$. 
As such, \eqref{almost} allows us to conclude  \eqref{limit}, and thus gives the statement of the Theorem \ref{main2}.
\hfill$\square$

\section{Proof of Theorem \ref{main2} (ii)}

The proof needs some adaptation when we deal with the case $\alpha = 1$.  Principally, we need to focus on Lemmas \ref{unif} and \ref{L1}. What is different in these two lemmas is that the normalisation constant $c_{\alpha,d}$ must now depend on $\varepsilon$. The replacement for Lemma \ref{unif} and Lemma \ref{L1} (combined into one result) now takes the following form.

\begin{lemma}
\label{L3}
Suppose that we define, for $0<\varepsilon<1$,
\begin{equation}
\mu_\varepsilon (\dd y) =\frac{ c_{1, d}}{|\log \varepsilon|} (|y|-(1-\varepsilon))^{-\alpha/2} (1+\varepsilon-|y|)^{-\alpha/2} \ell_d(\dd y),
\label{mueps2}
 \end{equation}
 and
\[
c_{1, d} =  \frac{\Gamma((d-1)/2)}{ \pi^{(d+1)/2}}.
\] 
\begin{itemize}
\item[(i)]We have 
\[
\lim_{\varepsilon\to0}\sup_{x\in\mathbb{S}^{d-1}_\varepsilon}|U\mu^{(1)}_\varepsilon(x) -1|=0.
\]
\item[(ii)] take $\delta(\varepsilon) =   |\log \varepsilon|^{-1/2(d-1)}$, then 
\[
\limsup_{\varepsilon\to0} \sup_{x\in\mathsf{S}_\varepsilon}\sqrt{|\log \varepsilon|} U\mu_{\varepsilon,\delta(\varepsilon)}^{(2)} (x) <\infty,
\]
so that 
\[
\lim_{\varepsilon\to0} \sup_{x\in\mathsf{S}_\varepsilon} U\mu_{\varepsilon,\delta(\varepsilon)}^{(2)} (x)=0.
\]

\end{itemize}
\end{lemma}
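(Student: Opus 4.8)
The plan is to re-run the proofs of Lemmas \ref{unif} and \ref{L1}, tracking the single structural feature that changes at $\alpha=1$: the hypergeometric function generated by the angular integral acquires a \emph{logarithmic}, rather than algebraic, singularity at its endpoint, and it is precisely this logarithm that produces the factor $|\log\varepsilon|$ against which $\mu_\varepsilon$ is renormalised in \eqref{mueps2}.

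For part (i) I would start from the observation that the reduction of the angular part of $U\mu^{(1)}_\varepsilon(x)$ via \eqref{3.665}, and the ensuing change of variables leading to the representation \eqref{simplify}, are valid for every $\alpha\in(0,1]$; setting $\alpha=1$ and replacing $c_{\alpha,d}$ by $c_{1,d}/|\log\varepsilon|$ gives the starting point. The hypergeometric parameters are now $\tfrac{d-1}{2},\tfrac12;\tfrac d2$, and since $\tfrac{d-1}{2}+\tfrac12=\tfrac d2$ we are in the degenerate case $c=a+b$, where the connection formula reads ${}_2F_1\big(\tfrac{d-1}{2},\tfrac12;\tfrac d2;z\big)=-\tfrac{\Gamma(d/2)}{\Gamma((d-1)/2)\Gamma(1/2)}\log(1-z)+g(z)$ with $g$ extending continuously to $z=1$. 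Substituting this into \eqref{simplify}, the $g$-contribution is bounded uniformly in $x\in\mathbb{S}^{d-1}_\varepsilon$, because the remaining kernel $(r-\tfrac{1-\varepsilon}{|x|})^{-1/2}(\tfrac{1+\varepsilon}{|x|}-r)^{-1/2}$ has total mass $B(\tfrac12,\tfrac12)=\pi$ independently of $\varepsilon$ (this is the $\alpha=1$ analogue of \eqref{use1}, with its $(2\varepsilon)^{1-\alpha}$ replaced by $1$), so after division by $|\log\varepsilon|$ it vanishes. For the $-\log(1-z)$ term, in the first integral of \eqref{simplify} one has $z=r^2$ and $1-z\sim 2(1-r)$ near $r=1$, and in the second $z=r^{-2}$ and $1-z\sim 2(r-1)$; writing $\log\big(2|1-r|\big)=\log(2\varepsilon)+\log\big(|1-r|/\varepsilon\big)$ splits off a term $\log(2\varepsilon)$ times an arcsine-type integral which, after the substitutions $u=s+q$ and $v=p+t$ (with $p=|x|-1+\varepsilon$, $q=1+\varepsilon-|x|$, both $O(\varepsilon)$ and summing to $2\varepsilon$, the factor $|x|^{-1}$ from the rescaling being harmless), evaluates by
\[
\int_0^{p}\frac{\dd s}{\sqrt{(p-s)(q+s)}}+\int_0^{q}\frac{\dd t}{\sqrt{(p+t)(q-t)}}=\pi,
\]
valid for all $p,q\ge0$ thanks to $\arcsin\sqrt{p/(p+q)}+\arcsin\sqrt{q/(p+q)}=\tfrac12\pi$; the $\log(|1-r|/\varepsilon)$ piece is $O(1)$ uniformly in $x$ (its integrand is integrable and scales like $\sqrt{p/\varepsilon}\,|\log(p/\varepsilon)|$, which is bounded on $p\in[0,2\varepsilon]$), hence is killed by the $|\log\varepsilon|^{-1}$ factor. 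Collecting constants (using $\Gamma(1/2)=\sqrt\pi$), the requirement that the resulting product equal $1$ pins down the normalising constant $c_{1,d}$ and gives $U\mu^{(1)}_\varepsilon(x)\to1$ uniformly on $\mathbb{S}^{d-1}_\varepsilon$.

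For part (ii) I would follow the proof of Lemma \ref{L1} essentially word for word. Since $|x-y|>\delta$ for $x\in\mathsf{S}_\varepsilon$ and $y\in\hat{\mathsf{S}}_{\varepsilon}^{\delta}$, the estimate \eqref{use3} gives $\sup_{x\in\mathsf{S}_\varepsilon}U\mu_{\varepsilon,\delta}^{(2)}(x)\le\delta^{-(d-1)}\mu_\varepsilon(\mathbb{S}^{d-1}_\varepsilon)$, and $\mu_\varepsilon(\mathbb{S}^{d-1}_\varepsilon)$ is a universal constant times $|\log\varepsilon|^{-1}\int_{1-\varepsilon}^{1+\varepsilon}r^{d-1}(r-(1-\varepsilon))^{-1/2}(1+\varepsilon-r)^{-1/2}\,\dd r$. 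The only change from Lemma \ref{L1} is that at $\alpha=1$ this radial integral is $\varepsilon$-independent, $\int_{1-\varepsilon}^{1+\varepsilon}(r-(1-\varepsilon))^{-1/2}(1+\varepsilon-r)^{-1/2}\,\dd r=B(\tfrac12,\tfrac12)=\pi$ with $r^{d-1}=1+O(\varepsilon)$ uniformly, so the $\varepsilon^{1-\alpha}$ factor of \eqref{2ndthing} is replaced by a bounded quantity; hence $\sup_{x\in\mathsf{S}_\varepsilon}U\mu_{\varepsilon,\delta}^{(2)}(x)\le C_d\,\delta^{-(d-1)}|\log\varepsilon|^{-1}$ for a finite $C_d$. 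Taking $\delta(\varepsilon)=|\log\varepsilon|^{-1/(2(d-1))}$ gives $\delta(\varepsilon)^{-(d-1)}=|\log\varepsilon|^{1/2}$, so $\sqrt{|\log\varepsilon|}\sup_{x\in\mathsf{S}_\varepsilon}U\mu_{\varepsilon,\delta(\varepsilon)}^{(2)}(x)\le C_d<\infty$, and consequently $\sup_{x\in\mathsf{S}_\varepsilon}U\mu_{\varepsilon,\delta(\varepsilon)}^{(2)}(x)\le C_d|\log\varepsilon|^{-1/2}\to0$.

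The main obstacle is part (i): pushing the delicate uniform-in-$x$ asymptotics of Lemma \ref{unif} through the borderline case in which the hypergeometric connection formula itself degenerates. Three points need care: (a) isolating the logarithmic part of ${}_2F_1$ cleanly and controlling the continuous remainder $g$ uniformly over the relevant range of its argument; (b) combining the two radial sub-integrals of \eqref{simplify} and verifying the cancellation of arcsines yielding the constant $\pi$ \emph{uniformly} in the location of $|x|$ inside $[1-\varepsilon,1+\varepsilon]$, including the degenerate endpoint cases $|x|=1\pm\varepsilon$ where one sub-integral is empty; and (c) showing each error term is genuinely $O(1)$ uniformly in $x$, rather than merely $o(|\log\varepsilon|)$ as a crude estimate would give, so that it disappears upon division by $|\log\varepsilon|$.
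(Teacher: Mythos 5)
Your part (ii) is exactly the paper's argument: the bound \eqref{use3} with the $\alpha=1$ radial integral (which is $\varepsilon$-independent, total mass $\pi$) reproduces the paper's estimate \eqref{usea=1}, and the choice $\delta(\varepsilon)=|\log\varepsilon|^{-1/2(d-1)}$ is the same. Your part (i) also follows the paper's skeleton — the reduction to \eqref{simplify1} via \eqref{3.665} and the degenerate connection formula \eqref{f12} isolating the $-\log(1-z)$ singularity — but you extract the leading order differently: the paper evaluates each log-weighted radial integral in closed form by the substitution in \eqref{lob} and a Lobachevsky-function identity, relying on a cancellation of the Lobachevsky terms between the two radial pieces, whereas you split $\log u=\log\varepsilon+\log(u/\varepsilon)$, use the exact arcsine/beta identity (total kernel mass $\pi$ over the union of the two sub-intervals, uniformly in the position of $|x|$, including the endpoint cases), and kill the remainder by an $\varepsilon$-scaling bound. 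Your route is more elementary and makes the uniformity in $x$ transparent.

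The genuine gap is that you never close the constant bookkeeping, and that is the entire content of the precise statement. Carried to the end, your argument gives
\begin{equation*}
\lim_{\varepsilon\to0}\sup_{x\in\mathbb{S}^{d-1}_\varepsilon}\Big|U\mu^{(1)}_\varepsilon(x)-\frac{2\pi^{(d+1)/2}}{\Gamma((d-1)/2)}\,c_{1,d}\Big|=0,
\end{equation*}
because the log contribution of the two radial pieces together is $\pi\log\varepsilon+O(1)$ (your arcsine identity gives total mass $\pi$, not $\pi/2$). With the value of $c_{1,d}$ in the statement this limit is $2$, i.e.\ your method forces $c_{1,d}=\Gamma((d-1)/2)/(2\pi^{(d+1)/2})$, half the stated constant. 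The paper reaches the stated value through \eqref{lob}, but the displayed chain there loses a factor $2$: since $\dd u/\sqrt{(b+u)(a-u)}=2\,\dd t/(t^2+1)$, the passage to the $w$-integral should carry the factor $2$, as one can confirm by taking $a=b=\varepsilon$, where each log-integral equals $\tfrac{\pi}{2}\log\varepsilon-\tfrac{\pi}{2}\log 2$ and the pair sums to $\pi\log\varepsilon+O(1)$ — exactly your coefficient. So you cannot simply assert that your computation ``pins down'' the stated $c_{1,d}$: done honestly it contradicts it by a factor $2$, and you must either present the corrected normalisation (with the corresponding knock-on factor in Theorem \ref{main2}(ii)) or locate a compensating factor you believe you have missed; as written, the proposal glosses over the one genuinely delicate numerical point of the lemma.
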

\begin{proof}
We give only a sketch proof of both parts for the interested reader to use as a guide to reproduce the finer details. 
\medskip

(i) The essence of the proof is an adaptation of the proof of Lemma \ref{unif}. We pick up the proof of the latter at the analogue of \eqref{simplify}, albeit $\alpha = 1$ and $c_{\alpha,d}$ is replaced by $c_{1,d}/|\log\varepsilon|$, i.e.
 \begin{align}
U\mu_\varepsilon^{(1)} (x) &=  \frac{2c_{1, d}\pi^{d/2}}{|\log\varepsilon|\Gamma(d/2)} \int_{\frac{1-\varepsilon}{|x|}}^{1} 
     \frac{
     {_2}F_1 \Big(\frac{d-1}{2}, \frac{1}{2}; \frac{d}{2}; r^2\Big)  r^{d-1} 
     }{
      \Big(r-\frac{1-\varepsilon}{|x|}\Big)^{1/2} \Big(\frac{1+\varepsilon}{|x|}-r\Big)^{1/2}
     }
     \dd r
      \notag \\
    &\hspace{4cm}+ \frac{2c_{1, d}\pi^{d/2}}{|\log\varepsilon|\Gamma(d/2)}\int_{1}^{\frac{1+\varepsilon}{|x|}} 
     \frac{
      {_2}F_1 \Big(\frac{d-1}{2}, \frac{1}{2}; \frac{d}{2}; r^{-2}\Big) r^{1-1} 
     }{
      \Big(r-\frac{1-\varepsilon}{|x|}\Big)^{1/2} \Big(\frac{1+\varepsilon}{|x|}-r\Big)^{1/2}
     }
     \dd r.    \label{simplify1}
   \end{align}
    Appealing to \eqref{f12}, noting that $\log (1-r^2)\sim \log(1-r) + \log 2$, as $r\to 1$, we can deduce that there is an unimportant constant, say $\chi$, such that
\begin{align}
&\lim_{\varepsilon\to0}\sup_{x\in\mathbb{S}^{d-1}_\varepsilon}\Bigg|\frac{2 c_{1, d}\pi^{d/2}}{|\log\varepsilon|\Gamma(d/2)}\int_{\frac{1-\varepsilon}{|x|}}^{1} 
     \frac{
     {_2}F_1 \Big(\frac{d-1}{2}, \frac{1}{2}; \frac{d}{2}; r^2\Big)  r^{d-1} 
     }{
      \Big(r-\frac{1-\varepsilon}{|x|}\Big)^{1/2} \Big(\frac{1+\varepsilon}{|x|}-r\Big)^{1/2}
     }
     \dd r\notag\\
     & \hspace{2cm} +
  \frac{2 c_{1, d}\pi^{d/2}}{|\log\varepsilon|\Gamma((d-1)/2) \Gamma(1/{2})}  \int_{\frac{1-\varepsilon}{|x|}}^{1} 
     \frac{
     r^{d-1}  \log(1-r)
     }{
      \Big(r-\frac{1-\varepsilon}{|x|}\Big)^{1/2} \Big(\frac{1+\varepsilon}{|x|}-r\Big)^{1/2}
     }
     \dd r\notag\\
&   \hspace{4cm}  -
    \frac{c_{1,d} \chi }{|\log\varepsilon|} \int_{\frac{1-\varepsilon}{|x|}}^{1} 
     \frac{
     r^{d-1} 
     }{
      \Big(r-\frac{1-\varepsilon}{|x|}\Big)^{1/2} \Big(\frac{1+\varepsilon}{|x|}-r\Big)^{1/2}
     }\dd r\Bigg|=0.
     \label{supto02}
\end{align}
A similar uniform limiting control can be undertaken by subtracting off analogous terms from the second integral in \eqref{simplify1}, i.e. the integral
\[
\frac{2c_{1, d}\pi^{d/2}}{|\log \varepsilon|\Gamma(d/2)}\int_{1}^{\frac{1+\varepsilon}{|x|}} 
     \frac{
      {_2}F_1 \Big(\frac{d-1}{2}, \frac{1}{2}; \frac{d}{2}; r^{-2}\Big)
     }{
      \Big(r-\frac{1-\varepsilon}{|x|}\Big)^{1/2} \Big(\frac{1+\varepsilon}{|x|}-r\Big)^{1/2}
     }
     \dd r.  
\]
Using \eqref{use1}, again noting $\alpha =1$, we can uniformly control the last term in \eqref{supto02} and note that it is $O(1/|\log\varepsilon|)$. Similarly to \eqref{preadd1}, the second term term in \eqref{supto02} has the same behaviour as 
\begin{equation}
- \frac{2 c_{1, d}\pi^{d/2}}{|\log\varepsilon|\Gamma((d-1)/2) \Gamma(1/{2})}   \int_0^{1-\frac{1-\varepsilon}{|x|}} \frac{ \log u}{(\frac{1+\varepsilon-|x|}{|x|}+u)^{1/2}(\frac{|x|-(1-\varepsilon)}{|x|}-u)^{1/2}}     \dd u. 
\label{logint}
\end{equation}
To evaluate \eqref{logint}, using the change of variable $ u = a - ({a+b})/({t^2+1})$
\begin{eqnarray}
    \int_0^a \frac{\log u 
    }{\sqrt{(b+u)(a-u)}}  \dd u&=& 2 \int_{\sqrt{\frac{b}{a}}}^\infty \log \Big(a-\frac{a+b}{t^2+1}\Big) \frac{\dd t}{t^2+1} \notag \\
    &=& \int^{\arctan \sqrt \frac{a}{b}}_{0} \log(a-(a+b)\sin^2 {w}) \dd{w} \notag \\
    &=& \int^{\arctan \sqrt \frac{a}{b}}_{0} \log a + \log\Big(1- \frac{\sin^2 {w}}{\frac{a}{a+b}}\Big) \dd{w} \notag \\
    &=& \arctan \sqrt \frac{a}{b}  \log (a+b) - L\Big(\frac{\pi}{2}-2\arctan \sqrt{\frac{a}{b}}\Big)  - \frac{\pi}{2} \log 2
    \label{lob}
\end{eqnarray}
where we have used formula 4.226(5) of \cite{table}, which tells us that 
\begin{equation}
    \int_0^u \log\Big(1-\frac{\sin^2 {w}}{\sin^2 {v}}\Big) \dd{w} = -u \log \sin^2{v} - L(\frac{\pi}{2}-{v}+u) - L(\frac{\pi}{2}-{v}-u)
\end{equation}
for any $-\frac{\pi}{2} \le u \le \frac{\pi}{2}$ and $|\sin u| \le |\sin {v}|$ where $L(x)$ is the Lobachevsky function. Note that, Lobachevsky's function is defined and represented as 
\begin{eqnarray}
    L(x) &=& -\int_0^x \log \cos \theta \,\dd \theta = x \log 2 - \frac{1}{2} \sum_{k=1}^\infty (-1)^{k-1} \frac{\sin 2kx}{k^2}. 
\end{eqnarray}
Using \eqref{lob} to evaluate \eqref{logint} as well to evaluate the partner integral to \eqref{logint}, which comes from the analogous control of the second integral in \eqref{simplify1}, we get a nice cancellation of terms (as happened at this stage of the argument for $\alpha \in(0,1)$), to give us the controlled feature that
\[
\lim_{\varepsilon\to0}\sup_{x\in\mathbb{S}^{d-1}_\varepsilon}
\left|
U\mu_\varepsilon^{(1)} (x) +   \frac{2 c_{1, d}\pi^{d/2}}{|\log\varepsilon|\Gamma((d-1)/2) \Gamma(1/{2})} \frac{\pi}{2}\log\varepsilon \right| = 0.
\]
Noting that with the indicated choice of $c_{1,d}$, we have 
\[
 \frac{2 c_{1, d}\pi^{d/2}}{\Gamma((d-1)/2) \Gamma(1/{2})} \frac{\pi}{2} = 1,
 \]
 which concludes the proof of part (i).

 \medskip
 
 (ii) For the second part, the proof is almost identical to the proof of Lemma \ref{L1}. Indeed, following the calculations through to \eqref{use4}, recalling that we have replaced $c_{\alpha, d}$ by $c_{1,d}/|\log\varepsilon|$, we get, up to an unimportant constant $\chi'$, 
 \begin{align}
   \sup_{x\in\mathsf{S}_\varepsilon} U\mu_{\varepsilon,\delta}^{(2)} (x) &\leq \chi' \frac{1}{|\log\varepsilon|\delta^{d-1}}.
    \label{usea=1}
\end{align}
Hence, by taking $\delta = \delta (\varepsilon) = |\log \varepsilon|^{-1/2(d-1)}$ the statement of part (ii) follows. 
\end{proof}

With Lemma \ref{L3} in hand, we can now complete the proof of Theorem \ref{main2} (ii). Inequalities \eqref{lowerbound} and \eqref{upperbound} are still at our disposal for the same reasons as before. The proof thus boils down to the 
asymptotic  treatment  of the term $U\mu^\mathsf{S}_\varepsilon(x)$ as in \eqref{usefora=1} for $x\not\in\mathsf{S}$.
Recalling that we have replaced $c_{\alpha,d}$ by $c_{1,d}/|\log\varepsilon|$ we get from \eqref{2ndthing}
and the constant $c_{1,d}$ given in the statement of Lemma \ref{L3},
\[
\lim_{\varepsilon\to0}\ |\log\varepsilon|\  \mathbb{P}_x(\tau_{\mathsf{S}_\varepsilon}<\infty) = \frac{\Gamma((d-1)/2)}{ \pi^{(d+1)/2}}\Gamma(1/2)^2
H_\mathsf{S}(x)=\frac{\Gamma((d-1)/2)}{ \pi^{(d-1)/2}}H_\mathsf{S}(x)
\]
where we have used that $\Gamma(1/2) = \sqrt{\pi}$.
\qed

\section{Proof of Theorem \ref{main}}
Recall the definition $\tau_\beta : = \inf\{t>0 \colon 1/{\beta} < |X_t| < \beta\}$ for  $\beta > 1.$ Then, by Markov property applied at time $\tau_\beta$, we have, for any $\Lambda \in \mathcal{F}_t,$   
\begin{equation}
    \mathbb{P}_x (\Lambda, t < \tau_\beta | \tau_{\mathsf{S}_\varepsilon} < \infty) = \mathbb{E}_x \left[ \mathbf{1}_{\{\Lambda,t < {\tau_\beta} \}} \frac{\mathbb{P}_{X_t} (\tau_{\mathsf{S}_\varepsilon}  < \infty)}{\mathbb{P}_x (\tau_{\mathsf{S}_\varepsilon}  < \infty)} \right].
    \label{expect}
\end{equation}
The event $\{t<\tau_\beta\}$ implies that either  $|X_t|>\beta>1$ or $|X_t|<{1}/{\beta}<1$. Moreover, there exists a $\varepsilon_0>0$ such that, for all $0<\varepsilon<\varepsilon_0$,  $(1-\varepsilon, 1+\varepsilon)\subset (1/\beta, \beta)$. Hence, 
for all $0<\varepsilon<\varepsilon_0$ and $y\in \mathbb{S}^{d-1}_\varepsilon$,  on $\{t<\tau_\beta\}$,
\[
|X_t-y|^{\alpha - d}<\max\{((1-\varepsilon_0)-1/\beta)^{\alpha-d}, (\beta -(1+\varepsilon_0))^{\alpha-d}\},
\]
Hence, on $\{t<\tau_\beta\}$, we have from \eqref{upperbound} and \eqref{rstar} that we can choose $\varepsilon$ sufficiently small such that 
\[
\varepsilon^{\alpha -1}\mathbb{P}_{X_t} (\tau_{\mathsf{S}_\varepsilon}  < \infty)< K_1
\]
for some constant $K_1\in(0,\infty)$. In a similar spirit, using \eqref{lowerbound} and \eqref{rstar}, since $x\not\in \mathsf{S}$ and $\mathsf{S}$ is closed, 
it follows similarly that there is another constant $K_2\in(0,\infty)$ such that, for $x$ given in \eqref{expect}, we can choose $\varepsilon$ sufficiently small such that 
\[
{\varepsilon^{\alpha -1}\mathbb{P}_{x} (\tau_{\mathsf{S}_\varepsilon}  < \infty)}>K_2.
\]

Theorem \ref{main2}, dominated convergence and    monotone convergence gives us, for all $\Lambda\in \mathcal{F}_t$, $t\geq0$,
\begin{align*}
\lim_{\beta\to1} \lim_{\varepsilon\to0}   \mathbb{P}_x (\Lambda, t < \tau_\beta | \tau_{\mathsf{S}_\varepsilon} < \infty)& = \lim_{\beta\to1}\mathbb{E}_x \left[ \mathbf{1}_{\{\Lambda,t < {\tau_\beta} \}}  \lim_{\varepsilon\to0} \frac{\varepsilon^{\alpha-1}\mathbb{P}_{X_t} (\tau_{\mathsf{S}_\varepsilon}  < \infty)}{\varepsilon^{\alpha-1}\mathbb{P}_x (\tau_{\mathsf{S}_\varepsilon}  < \infty)} \right]=\mathbb{E}_x \left[ \mathbf{1}_{\Lambda}  \frac{H_\mathsf{S}(X_t)}{H_\mathsf{S}(x)} \right],
\end{align*}
as required.\hfill$\square$

\section{Proof of Theorem \ref{Naga}}

Recall the notation for a general Markov process $(Y,\texttt{P})$ on $E$ preceding the statement of Theorem \ref{Naga}. 
We will additionally write $\mathcal{P}: = (\mathcal{P}_t, t\geq 0)$ for the   semigroup associated to $(Y,\texttt{P})$.

\smallskip

 Theorem 3.5 of Nagasawa \cite{Naga}, shows that, under suitable assumptions on the Markov process, $L$-times form a natural family of random times  at which the pathwise time-reversal 
\[
\stackrel{_\leftarrow}{Y}_{\!t}:=Y_{(\texttt{k}-t)-},\qquad  t\in [0,\texttt{k}],
\]
 is again a Markov process.  Let us state Nagasawa's principle assumptions.

\smallskip

\textbf{(A)} The potential measure $U_Y(a, \cdot)$ associated to $\mathcal{P}$, defined by the relation 
\begin{equation}
\int_Ef(x)U_Y(a,\dd x) = \int_0^\infty \mathcal{P}_t[f](a)\dd t={\texttt E}_a\left[\int_0^\infty f(X_t)\,\dd t\right],\qquad a\in E,
\label{GY}
\end{equation}
 for bounded and measurable $f$ on $E$, is $\sigma$-finite. Assume  that there exists  a probability measure, $\nu$, such that, if we put
\begin{align}\label{a1}
	\mu(A)=\int U_Y(a,A)\, \nu(\dd a)\quad \text{ for }A\in \mathcal B(\R),
\end{align}
	then there exists a Markov transition semigroup, say $\hat{\mathcal{P}}: = (\hat{\mathcal P}_t, t\geq 0)$ such that 	\begin{align}
		\int_E \mathcal{P}_t[f](x) g(x)\, \mu(\dd x)=\int_E f(x) \hat{\mathcal P}_t [g](x)\,\mu(\dd x),\quad t\geq 0,
		\label{weakdualtity}
	\end{align}
	for bounded, measurable and compactly supported test-functions $f, g$ on $E$.\smallskip
	


\smallskip

\textbf{(B)} For any continuous test-function $f\in C_0(E)$, the space of continuous and compactly supported functions,  and $a\in E$, assume that $\mathcal{P}_t[f](a)$ is right-continuous in $t$ for all $a\in E$ and, for $q> 0$, ${U}_{\hat Y}^{(q)}[f](\stackrel{_\leftarrow}{Y}_{\!t})$ is right-continuous in $t$, where, for bounded and measurable $f$ on $E$,
 \[
{U}_{\hat Y}^{(q)}[f](a) =\int_0^\infty  {\rm e}^{-qt}\hat{\mathcal{P}}_t[f](a){\dd t},\qquad  a\in E\]
is the $q$-potential associated to $\hat{\mathcal P}$.
\smallskip

 Nagasawa's  duality theorem, Theorem 3.5. of \cite{Naga}, now reads as follows.

 \begin{theorem}[Nagasawa's duality theorem]\label{Ndual} Suppose that assumptions {\rm{\bf (A)} } and {\rm{\bf (B)}} hold. For the given starting probability distribution $\nu$ in {\rm{\bf (A)} } and any $L$-time $\emph{\texttt{k}}$, the time-reversed process $\stackrel{_\leftarrow}{Y}$ under $\emph{\texttt P}_\nu$ is a time-homogeneous Markov process with transition probabilities
\begin{align}
	\emph{\texttt{P}}_\nu(\stackrel{_\leftarrow}{Y}_t \in A\,|\stackrel{_\leftarrow}{Y}_r, 0<r< s)=\emph{\texttt{P}}_\nu(\stackrel{_\leftarrow}{Y}_t \in A\,|\stackrel{_\leftarrow}{Y}_s)={p}_{\hat{Y}}(t-s,\stackrel{_\leftarrow}{Y}_s,A),\quad \emph{\texttt{P}}_\nu\text{-almost surely},
\end{align}
for all $0<s<t$ and closed $A$ in $\mathbb{R}$, where ${p}_{\hat{Y}}(u, x, A)$, $u\geq 0$, $x\in\mathbb{R}$, is the transition measure associated to the semigroup $\hat{\mathcal P}$.
\end{theorem}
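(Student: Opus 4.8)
The plan is to follow the scheme of Nagasawa \cite{Naga}. The underlying principle is that the measure $\mu$ of~\textbf{(A)} is \emph{excessive} for $\mathcal{P}$ and $\hat{\mathcal{P}}$ is its $\mu$-dual: combining \eqref{GY} and \eqref{a1} gives $\int_E f\,\dd\mu=\texttt{E}_\nu[\int_0^\zeta f(Y_u)\,\dd u]$ for bounded measurable $f\ge0$, whence $\mu\mathcal{P}_t\le\mu$ for every $t$, while \eqref{weakdualtity} says precisely that $\hat{\mathcal{P}}_t$ is the adjoint of $\mathcal{P}_t$ on $L^2(\mu)$ tested against $C_0(E)$. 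This is the classical Hunt--Nagasawa setting for time reversal. I would prove the statement in two stages: first pin down the finite-dimensional distributions of $\stackrel{_\leftarrow}{Y}$ under $\texttt{P}_\nu$, showing they factor through $p_{\hat Y}$; then upgrade this to the (conditional) Markov statement using the right-continuity hypotheses of~\textbf{(B)}.

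The analytic core is the reversal of an $n$-point functional for the process run from the excessive measure. Using the duality relation \eqref{weakdualtity} together with the ordinary Markov property of $Y$, an induction on $n$ shows that for $0<t_1<\dots<t_n$ and bounded $f_1\cdotsc f_n$ on $E$ the $n$-point occupation integral of $Y$ ``flips'': reversing the order of the times turns it into the corresponding $n$-point integral built from $\hat{\mathcal{P}}$. Concretely, the base case $\int_E \mathcal{P}_{t}[f]\, g\,\dd\mu=\int_E f\,\hat{\mathcal{P}}_{t}[g]\,\dd\mu$ is exactly \eqref{weakdualtity}, and each inductive step peels off one more $\mathcal{P}$-transition and replaces it by a $\hat{\mathcal{P}}$-transition via the same adjointness, after conditioning on the value of $Y$ at the relevant deterministic time.

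To transport this from $\mu$ to $\texttt{P}_\nu$ and to the trajectory cut at $\texttt{k}$, I would use two facts. First, $\mu=\int U_Y(a,\cdot)\,\nu(\dd a)$ is the expected occupation measure of $Y$ under $\texttt{P}_\nu$, so $n$-point functionals of $(Y,\texttt{P}_\nu)$ on $[0,\texttt{k})$ are governed by the integrals just considered. Second --- and this is where $\texttt{k}$ being an $L$-time enters --- property~(ii) gives $\texttt{k}\circ\theta_t=\texttt{k}-t$ on $\{t<\texttt{k}\}$, so the reversed segment $(\stackrel{_\leftarrow}{Y}_s:0\le s\le\texttt{k}-t)$ is a measurable functional of the post-$t$ path $\theta_t\omega$ alone, while $Y_t$ is measurable in $\sigma(Y_u:u\le t)$. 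Splitting the $\texttt{P}_\nu$-path successively at the deterministic levels $t_1<\dots<t_n$ and using the Markov property to decouple past from future at each split, the future functionals get replaced --- exactly as in the $\mu$-computation --- by $\hat{\mathcal{P}}$-transitions, yielding
\[
\texttt{E}_\nu\!\left[\,\prod_{i=1}^{n}f_i(\stackrel{_\leftarrow}{Y}_{t_i})\,;\,t_n<\texttt{k}\,\right]
=\int_E\lambda(\dd x_1)f_1(x_1)\int_E p_{\hat Y}(t_2-t_1,x_1,\dd x_2)f_2(x_2)\cdots\int_E p_{\hat Y}(t_n-t_{n-1},x_{n-1},\dd x_n)f_n(x_n),
\]
with $\lambda(\dd x_1)=\texttt{P}_\nu(\stackrel{_\leftarrow}{Y}_{t_1}\in\dd x_1,\,t_1<\texttt{k})$; a monotone-class argument then gives $\texttt{P}_\nu(\stackrel{_\leftarrow}{Y}_t\in A\mid\sigma(\stackrel{_\leftarrow}{Y}_r:r\le s))=p_{\hat Y}(t-s,\stackrel{_\leftarrow}{Y}_s,A)$ almost surely for $0<s<t$. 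Since $U_Y(a,\cdot)$ and hence $\mu$ are only $\sigma$-finite, these steps must first be carried out for the $q$-killed subprocesses of $Y$ and $\hat Y$ ($q>0$), for which all measures are finite and \eqref{weakdualtity} holds with $\mathcal{P},\hat{\mathcal{P}}$ replaced by their $q$-subsemigroups, and then passed to the limit $q\downarrow 0$ by monotone convergence and the right-continuity of $t\mapsto\mathcal{P}_t[f](a)$ and of $t\mapsto U^{(q)}_{\hat Y}[f](\stackrel{_\leftarrow}{Y}_t)$ from~\textbf{(B)}; the same right-continuity also furnishes a $\texttt{P}_\nu$-a.s.\ right-continuous version of $\stackrel{_\leftarrow}{Y}$, so that the finite-dimensional identities genuinely identify it as a time-homogeneous Markov process with semigroup $\hat{\mathcal{P}}$.

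I expect the main obstacle to be structural rather than computational: $\texttt{k}$ is an $L$-time, not a stopping time, so the strong Markov property is unavailable at $\texttt{k}$ and all the ``Markov surgery'' must be performed at deterministic times and then transported to $\texttt{k}$ through the shift identity~(ii) together with the $\overline{\mathcal{G}}$-measurability and the constraint $\texttt{k}\le\zeta$ in~(i). Carefully tracking the event $\{t<\texttt{k}\}$ and the measurability of the reversed functionals through this decomposition --- so that the Markov property of $Y$ may legitimately be invoked and the duality relation applied termwise --- is the delicate part of the argument, and is exactly the content of Nagasawa's Theorem~3.5 \cite{Naga}.
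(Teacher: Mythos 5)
You should first be aware that the paper contains no proof of this statement at all: Theorem \ref{Ndual} is quoted verbatim as Theorem 3.5 of Nagasawa \cite{Naga} and is then used as a black box in the proof of Theorem \ref{Naga}. So there is no in-paper argument to compare yours against; the relevant benchmark is Nagasawa's original proof, and your outline does reproduce its broad architecture: $\mu$ of \textbf{(A)} is the expected occupation measure $\int U_Y(a,\cdot)\,\nu(\dd a)$ (so it is excessive), \eqref{weakdualtity} makes $\hat{\mathcal P}$ the $\mu$-adjoint of $\mathcal P$, the finite-dimensional distributions of the reversed path are obtained by repeated application of this adjointness, $\sigma$-finiteness of $\mu$ is tamed via $q$-subprocesses, and \textbf{(B)} supplies the regularity needed to pass from finite-dimensional identities to the conditional Markov statement.

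As a self-contained proof, however, your proposal stops exactly where the theorem is difficult, and you essentially say so in your last sentence by deferring the delicate part to ``the content of Nagasawa's Theorem 3.5''. The concrete gap is the step ``splitting the $\texttt{P}_\nu$-path successively at the deterministic levels $t_1<\dots<t_n$ and using the Markov property'': the reversed times $t_i$ correspond to the forward times $\texttt{k}-t_i$, which are random and are \emph{not} stopping times, so the Markov property of $Y$ cannot be invoked at them directly, and the positions $Y_{(\texttt{k}-t_i)-}$ are not functionals of the pre-$t_i$ path. Nagasawa's actual argument circumvents this by combining the shift identity (ii) for $\texttt{k}$ with a Fubini-type integration over a deterministic forward time (so that the event $\{t<\texttt{k}\}$ and the reversed positions are rewritten through shifted functionals, the Markov property is applied at the deterministic time, and the occupation measure $\mu$ appears as the Lebesgue-in-time integral of $\nu\mathcal{P}_u$), and it is precisely this computation --- the one-step identity showing the reversed transition is $\hat{\mathcal P}$ regardless of the entrance law at $\texttt{k}$ --- that your sketch gestures at but does not carry out. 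Until that identity is proved, the displayed $n$-point formula and the subsequent monotone-class upgrade are unsupported; the remaining ingredients ($q\downarrow0$ limits, use of \textbf{(B)}) are fine as described.
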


\subsubsection*{Completing the proof of Theorem \ref{Naga}}
We will make a direct application of Theorem \ref{Ndual}, with $Y$ taken to be the process $(X,\mathbb{P}_\nu)$ where $\nu$ satisfies \eqref{closed}.  Recall that its potential is written $U$ and we will denote its transition semigroup by $(\mathcal{P}_t, t\geq0)$.  Moreover, the dual process, formerly $\hat Y$,  is taken to be $(X,\mathbb{P}^\mathsf{S})$ and we will, in the obvious way, work with the notation $U^\mathsf{S}$ in place of $U_{\hat{Y}}$, $\mathcal{P}^\mathsf{S}$ in place of $\hat{\mathcal{P}}$ and so on. We need only to verify the two assumptions {\bf (A)} and {\bf (B)}.
\smallskip

In order to verify {\bf (A)}, writing
\begin{equation*}
     U(x,\dd y)=\int_0^\infty \mathbb{P}_{x} (X_t \in{\dd y}) \dd t = \frac{\Gamma((d-\alpha)/2)}{2^{\alpha} \pi^{d/2}\Gamma(\alpha/2)} |x-y|^{\alpha-d}\ell_d({\dd y}), \qquad x,y\in \mathbb{R}^d,
\end{equation*}

 we have, up to a multiplicative constant, 
\begin{eqnarray}
    \eta(\dd x) &=& \int_{\mathbb{R}^d} U(a,\dd x) \nu ({\dd a}) = \frac{1}{\sigma_1(\mathsf{S})} \int_{\mathsf{S}} |x-a|^{\alpha-d} \sigma_1 ({\dd a}) \propto H_{\mathsf{S}}(x)  \dd x.
    \label{eta}
\end{eqnarray}
Now, we need to verify that  \eqref{weakdualtity} holds. Hunt's switching identity (cf. Chapter II.1 of  \cite{Bertoin}) for  $(X, \mathbb{P})$,
states that 
\[
\mathcal{P}_t (y, \dd x)\dd y = \mathcal{P}_t (x, \dd y)\dd x, \qquad x,y \in\mathbb{R}^d.
\]
Using Hunt's switching identity together with \eqref{eta},  we have  for $x,y\in\mathbb{R}^d \setminus \mathsf{S}$
\begin{align*}
 \mathcal{P}_t (y, \dd x) \eta(\dd y) = \mathcal{P}_t (y,\dd x)  H_\mathsf{S}(y)  \dd y = \mathcal{P}_t (x,\dd y) \frac{H_\mathsf{S}(y)}{H_\mathsf{S}(x)} H_\mathsf{S}(x) \dd x =  \mathcal{P}_t^\mathsf{S} (x,\dd y) \eta(\dd x).
\end{align*}
Let us now turn to the verification of assumption {\bf (B)}.
This assumption is immediately satisfied on account of the fact that $\mathcal{P}^\mathsf{S}$ is a right-continuous semigroup by virtue of its definition as a Doob $h$-transform with respect to the Feller semigroup $\mathcal{P}$ of the stable process. 

\smallskip

With both {\bf (A)} and {\bf (B)} in hand, we are ready to apply Theorem \ref{Ndual} and the desired result thus follows.\hfill$\square$

\section{Proof of Theorem \ref{main5}}
For the proof of Theorem \ref{main5}, we focus on just part (i) and (ii) as the proof of parts (iii)--(v) are essentially verbatim the same as for the case of $\mathsf{S}\in \mathbb{S}^{d-1}$.  Moreover, for both parts (i) and (ii) we will provide only a sketch proof as the reader will quickly see that the proof is not hugely different form that of Theorem \ref{main2}, albeit for a few technical details.

\smallskip

 (i) The substance of the proof of part (i) is thus to follow a similar strategy as with Theorem \ref{main2} and build a measure $\rho^\mathsf{D}_\varepsilon$ such that the analogue of \eqref{o(1)} holds, i.e. $U\rho^\mathsf{D}_\varepsilon(x) = 1 + o(1)$, for $x\in \mathsf{D}$ so that 
$
(1+o(1))\mathbb{P}_x(\tau_{\mathsf{D}\varepsilon} <\infty) = U\rho^{\mathsf{D}}_\varepsilon (x)$, $x\not\in \mathsf{D}\varepsilon
$.
More precisely, we develop analogues of Lemmas  \ref{unif} and \ref{L1} to help make this precise. 

\smallskip

Following what we have learned for $\mu^\mathsf{S}_\varepsilon$, our choice of $\rho^\mathsf{D}_\varepsilon$ is built from the base  measure  
\begin{equation}
\rho_\varepsilon(\dd y)=k_{\alpha, d}((v,y)+\varepsilon)^{-\alpha/2}(\varepsilon-(v,y))^{-\alpha/2} \ell_d(\dd y).
\label{planarmueps}
\end{equation}
for an appropriate choice of $k_{\alpha, d}$.
As in \eqref{differencee} 
the we can work with the decomposition, 
\begin{equation}
U\rho^{\mathsf{D}}_\varepsilon(x) = U\rho^{(1)}_\varepsilon (x) - U\rho^{(2)}_\varepsilon(x) \qquad x\in \mathsf{D}_\varepsilon,
\label{differencee3}
\end{equation}
where $\rho^{(1)}_\varepsilon$ (resp. $\rho^{(2)}_\varepsilon$) is the restriction of $\rho_\varepsilon$ to $\mathbb{H}^{d-1}_\varepsilon: = \{x\in\mathbb{R}^d:  -\varepsilon< (v, x)<\varepsilon \}$  (resp. to $\hat{\mathsf{D}}_\varepsilon: = \mathbb{H}_\varepsilon^{d-1} \setminus \mathsf{D}_\varepsilon$).
This  helps with lower bounding $\liminf_{\varepsilon \to 0}  \varepsilon^{\alpha-1}\mathbb{P}_x(\tau_{\mathsf{D}_\varepsilon}<\infty)$ by following steps of \eqref{bits}--\eqref{liminf} together with the last paragraph of the Proof of Theorem \ref{main2}, for which an analogue of Lemma \ref{unif} is needed.

\smallskip

For each $|u|<\varepsilon$, define the following sets: 
 $\mathsf{D}^\delta = \{x\in\mathbb{H}^{d-1}: {\rm dist}(x, \mathsf{D})<\delta\}$, $\mathsf{D}^\delta_\varepsilon= \{y \in \mathbb{H}^{d-1}_\varepsilon \colon \hat{y}\in \mathsf{D}^\delta \}$ (recalling $\hat y$ is the orthogonal projection of $y$ on to $\mathbb{H}^{d-1}$)  and $\hat{\mathsf{D}}^\delta_\varepsilon = \mathbb{H}^{d-1}_\varepsilon \setminus \mathsf{D}^\delta_\varepsilon$.  Similarly, in the spirit of \eqref{differencee2} we can use the decomposition 
\begin{equation}
U\rho^{\mathsf{D}^\delta}_\varepsilon(x) = U\rho^{(1)}_\varepsilon (x)  - U\rho^{(2)}_{\varepsilon,\delta} (x) \qquad x\in \mathsf{D}_\varepsilon,
\label{differencee4}
\end{equation}
where $\rho^{(2)}_{\varepsilon,\delta}$ is the restriction of of $\rho_\varepsilon$ to $\hat{\mathsf{D}}_\varepsilon^\delta$.
which helps with $\limsup_{\varepsilon \to 0}  \varepsilon^{\alpha-1}\mathbb{P}_x(\tau_{\mathsf{D}_\varepsilon}<\infty)$ by following steps \eqref{excessiveprop}--\eqref{limit} together with the last paragraph of the Proof of Theorem \ref{main2},  for which an analogue of Lemma \ref{L1} is needed.

\smallskip

Let us address the technical detail that differs from the proof of Theorem \ref{main2} that we alluded to above. 
For $x\in\mathsf{D}_\varepsilon$,
\begin{align*}
    &U\rho_\varepsilon^{(1)} (x) \notag\\
    &= k_{\alpha,d} \int_{\mathbb{H}_\varepsilon^{d-1}} |x-y|^{\alpha-d} ((v,y)+\varepsilon)^{-\alpha/2} (\varepsilon-(v,y))^{-\alpha/2}\ell_d(\dd y) \notag \\
    &= k_{\alpha,d} \int_{-\varepsilon}^{\varepsilon}  (u+\varepsilon)^{-\alpha/2} (\varepsilon-u)^{-\alpha/2} \dd u \int_{\mathbb{H}^{d-1}(u)} |x-y|^{\alpha-d}\ell_{d-1}(\dd y) \notag \\
    &= k_{\alpha,d} \int_{-\varepsilon}^{\varepsilon}  (u+\varepsilon)^{-\alpha/2} (\varepsilon-u)^{-\alpha/2} \dd u \int_{\mathbb{H}^{d-1}((v,x))} (|x-\hat{y}|^2+|(v,x)-u|^2)^{\frac{\alpha-d}{2}} \ell_{d-1}(\dd\hat{y}) 
\end{align*}
where $\hat{y}$ is the orthogonal projection of $y\in \mathbb{H}^{d-1}(u)$ on to $\mathbb{H}^{d-1}(x)$, which   satisfies $|\hat{y}-y|={|(v,x)-u|}$ and $\ell_{d-1}(\dd\hat{y})=\ell_{d-1}(\dd y)$. Note also that $(v,x- \hat{y})=0$, for $\hat{y}\in\mathbb{H}^{d-1}((v, x))$, and hence  $x- \mathbb{H}^{d-1}((v, x))$ is equal to $\mathbb{H}^{d-1}(0)$, which, in turn, can otherwise be identified as $\mathbb{R}^{d-1}$,  we have
\begin{align}
    &U\rho_\varepsilon^{(1)} (x) \notag\\
    &= k_{\alpha,d}  \int_{-\varepsilon}^{\varepsilon}  (u+\varepsilon)^{-\alpha/2} (\varepsilon-u)^{-\alpha/2}\dd u \int_{\mathbb{H}^{d-1}((v,x))} \Big(|x-\hat{y}|^2+|(v,x)-u|^2\Big)^{\frac{\alpha-d}{2}} \ell_{d-1}(\dd\hat{y}) \notag \\
    &=\frac{2 k_{\alpha,d}  \pi^{(d-2)/2}}{\Gamma((d-2)/2)} \int_{-\varepsilon}^{\varepsilon}  (u+\varepsilon)^{-\alpha/2} (\varepsilon-u)^{-\alpha/2}\dd u 
    \int_0^\infty \int_{\mathbb{S}^{d-2}} \ \Big(r^2+{|(v,x)-u|^2}\Big)^{\frac{\alpha-d}{2}} r^{d-2}{\dd r}\sigma_1(d\theta) \notag \\
    &= \frac{2 k_{\alpha,d}  \pi^{(d-2)/2}}{\Gamma((d-2)/2)}  \int_{-\varepsilon}^{\varepsilon}  (u+\varepsilon)^{-\alpha/2} (\varepsilon-u)^{-\alpha/2}\dd u \int_0^\infty \Big(r^2+{|(v,x)-u|^2}\Big)^{\frac{\alpha-d}{2}} r^{d-2}{\dd r} \notag \\
    &= \frac{ k_{\alpha,d}  \pi^{(d-2)/2}}{\Gamma((d-2)/2)}  \int_{-\varepsilon}^{\varepsilon}  (u+\varepsilon)^{-\alpha/2} (\varepsilon-u)^{-\alpha/2}\dd u \int_0^\infty \Big(w+{|(v,x)-u|^2}\Big)^{\frac{\alpha-d}{2}} w^\frac{d-3}{2}{\dd w}\label{pickup} \\
    &= \frac{ k_{\alpha,d}  \pi^{(d-2)/2}\Gamma(\frac{1-\alpha}{2})\Gamma(\frac{d-1}{2})}
    {\Gamma(\frac{d-2}{2})\Gamma(\frac{d-\alpha}{2})} 
    \int_{-\varepsilon}^{\varepsilon}  (u+\varepsilon)^{-\alpha/2} (\varepsilon-u)^{-\alpha/2} {|(v,x)-u|^{\alpha-1}} \dd u \notag\\
    &=\frac{ k_{\alpha,d}  \pi^{(d-2)/2}\Gamma(\frac{1-\alpha}{2})\Gamma(\frac{d-1}{2})}
    {\Gamma(\frac{d-2}{2})\Gamma(\frac{d-\alpha}{2})} 
    \int_{-1}^{1}(1+w)^{-\alpha/2} (1-w)^{-\alpha/2} {|{\varepsilon^{-1}}{(v,x)}-w|^{\alpha-1}} \dd w
   \label{ux}
\end{align}
where, in the second equality, we have used generalised polar coordinates to integrate over   $\mathbb{H}^{d-1}(0)=\mathbb{R}^{d-1}$, in the penultimate equality, we used a classical representation of the Beta function (see formula 3.191.2 in \cite{table}), which tells us that, for any ${\rm Re}(\nu) > {\rm Re}(\gamma) >0$ and $z>0$, 
\begin{equation*}
    \int_0^\infty (y+z)^{-\nu} y^{\gamma-1} \dd y = z^{\gamma-\nu} \frac{\Gamma(\nu-\gamma)\Gamma(\gamma)}{\Gamma(\nu)},
\end{equation*}
and in the final equality, we have changed variables using $w = \varepsilon u$. 
Next, we observe that $|{\varepsilon^{-1}}{(v,x)}|\leq 1$ on account of the fact that $x\in\mathsf{D}_\varepsilon\subseteq \mathbb{H}^{d-1}_\varepsilon$.
Now choose $k_{\alpha, d}$, so that 
\begin{equation}
 \frac{ k_{\alpha,d}  \pi^{(d-2)/2}\Gamma(\frac{1-\alpha}{2})\Gamma(\frac{d-1}{2})}
    {\Gamma(\frac{d-2}{2})\Gamma(\frac{d-\alpha}{2})} =1.
    \label{kad}
\end{equation}
We can now appeal directly to \eqref{Simon} to deduce that, for $x\in \mathsf{D}_\varepsilon$ 
\begin{equation}
U\rho_\varepsilon^{(1)} (x) =1.
\end{equation}

In the spirit of \eqref{bits}--\eqref{liminf}, it now follows that, for $x\not\in \mathsf{D}$ and $\varepsilon$ sufficiently small, 
\[
U\rho^{\mathsf{D}}_\varepsilon(x) \leq \mathbb{P}_x(\tau_{\mathsf{D}_\varepsilon}<\infty) 
\]
so that 
\begin{equation}
\liminf_{\varepsilon\to0}U\rho^{\mathsf{D}}_\varepsilon(x) \leq \liminf_{\varepsilon\to0}\mathbb{P}_x(\tau_{\mathsf{D}_\varepsilon}<\infty), \qquad x\not\in\mathsf{D}.
\label{liminf2}
\end{equation}

Now we turn our attention to \eqref{differencee4}.   Define $\mathbb{H}^{d-1}(u) = \{x\in\mathbb{R}^d: (v, x) = u\}$, $\hat{\mathsf{D}}^\delta(u) = \mathbb{H}^{d-1}(u) \setminus \mathsf{D}^\delta(u)$ where $\mathsf{D}^\delta (u):= \{y \in \mathbb{H}^{d-1} (u)\colon \hat{y}\in \mathsf{D}^\delta \}$. Noting that when $x\in \mathsf{D}_\varepsilon$, $|x - y|>\delta$ for $y\in\hat{\mathsf{D}}^\delta_\varepsilon$, we have, for all $x\in \mathsf{D}_\varepsilon$,
\begin{align*}
U\rho^{(2)}_{\varepsilon,\delta} (x)&= k_{\alpha,d} \int_{\hat{\mathsf{D}}^\delta_\varepsilon} |x-y|^{\alpha-d} ((v,y)+\varepsilon)^{-\alpha/2} (\varepsilon-(v,y))^{-\alpha/2}\ell_d(\dd y) \notag \\ 
&\leq  k_{\alpha,d}\delta^{\alpha -d} \int_{-\varepsilon}^{\varepsilon}  (u+\varepsilon)^{-\alpha/2} (\varepsilon-u)^{-\alpha/2} \dd u
\int_{\mathsf{D}^{\delta}((v,x))}  \ell_{d-1}(\dd\hat{y}) \notag\\
&\leq \delta^{\alpha -d} k_{\alpha,d} \ell_{d-1}(\mathsf{D}^{\delta}) \int_{-\varepsilon}^{\varepsilon}  (u+\varepsilon)^{-\alpha/2} (\varepsilon-u)^{-\alpha/2} \dd u
 \notag\\
 &= \delta^{\alpha -d}\varepsilon^{1-\alpha}  k_{\alpha,d} \ell_{d-1}(\mathsf{D}^{\delta})2^{1-\alpha}\frac{ \Gamma((2-\alpha)/2)^2}{ \Gamma(2-\alpha)} ,
\end{align*}
where we have used the calculation in  \eqref{2ndthing} in the final equality. Choosing $\delta =\delta(\varepsilon) = \varepsilon^{(1-\alpha)/2(d-\alpha)}$, and noting that  $\ell_{d-1}(\mathsf{D}^{\delta})$ is uniformly bounded from above by an unimportant constant for e.g. all $\delta<1$ (thanks to the assumption that $\ell_{d-1}(\mathsf{D})<\infty$), we see that 
\[
\lim_{\varepsilon\to0}\sup_{x\in\mathsf{D}_\varepsilon} U\rho_{\varepsilon,\delta(\varepsilon)}^{(2)} (x)=0. 
\]

In a similar spirit to \eqref{excessiveprop}--\eqref{limsup}, we now have that 
\begin{equation}
\limsup_{\varepsilon\to 0}\varepsilon^{\alpha-1} U\rho^{\mathsf{D}^{\delta(\varepsilon)}}_\varepsilon(x) 
\geq  \limsup_{\varepsilon \to 0}  \varepsilon^{\alpha-1}\mathbb{P}_x(\tau_{\mathsf{D}_\varepsilon}<\infty), \qquad x\not\in\mathsf{D}.
\label{limsup2}
\end{equation}
Matching up the left-hand side of \eqref{liminf2} with that of \eqref{limsup2}, we can proceed in a similar fashion to \eqref{rstar} -- \eqref{almost}, leading to the statement of Theorem \ref{main5} (i) as promised.  The calculation is based around the fact that 
\begin{align}
\lim_{\varepsilon\to0}\varepsilon^{\alpha -1}U\rho^\mathsf{D}_\varepsilon(x)
&= \lim_{\varepsilon\to0} k_{\alpha, d} \varepsilon^{\alpha -1}\int_{\mathsf{D}_\varepsilon}|x-y|^{\alpha -d}((v,y)+\varepsilon)^{-\alpha/2}(\varepsilon-(v,y))^{-\alpha/2} \ell_d(\dd y)\notag\\
&=\lim_{\varepsilon\to0} k_{\alpha, d} \varepsilon^{\alpha -1 }\int_{-\varepsilon}^\varepsilon(u+\varepsilon)^{-\alpha/2}(\varepsilon-u)^{-\alpha/2} \dd u \int_{\mathsf{D}(u)}|x-\hat{y}|^{\alpha-d}\ell_{d-1}(\dd \hat y)\notag\\
&=k_{\alpha, d}  2^{1-\alpha}\frac{ \Gamma((2-\alpha)/2)^2}{ \Gamma(2-\alpha)}
\int_{\mathsf{D}}|x-{y}|^{\alpha-d}\ell_{d-1}(\dd  y)\notag\\
&= 2^{1-\alpha}\pi^{-(d-2)/2}\frac{\Gamma(\frac{d-2}{2})\Gamma(\frac{d-\alpha}{2}) \Gamma(\frac{2-\alpha}{2})^2}{\Gamma(\frac{1-\alpha}{2})\Gamma(\frac{d-1}{2})\Gamma(2-\alpha)}
\int_{\mathsf{D}}|x-{y}|^{\alpha-d}\ell_{d-1}(\dd y),
\label{somethingsimilarbelow}
 \end{align}
where    $\mathsf{D} (u):= \{y \in \mathbb{H}^{d-1} (u)\colon \hat{y}\in \mathsf{D} \}$ and we have used the calculation in \eqref{2ndthing} and \eqref{kad} in the third equality.

\medskip

(ii) The setting $\alpha = 1$ requires yet another delicate handing of the associated potentials. Given that all the main ideas are now present in the paper, we simply lay out the key points of the proof, leaving the remaining detail for the reader.
\smallskip

  Our calculations begin in the same way as in part (i), in particular, we work with the core measure $\rho_\varepsilon$ as in \eqref{planarmueps}, albeit (as with Theorem \ref{main2} (ii)) replacing $k_{1, d}$ by $k_{1,d}/|\log\varepsilon|$, to be used in the constructions \eqref{differencee3} and \eqref{differencee4}.  An immediate complication we have is in evaluating $U\rho_\varepsilon^{(1)}(x)$, for $x\in\mathsf{D}_\varepsilon$, can be seen when we pick up the computations for part (i) at \eqref{pickup}. Indeed, at that point, we are confronted with the integral
\[
 \int_0^\infty \Big(w+{|(v,x)-u|^2}\Big)^{\frac{1-d}{2}} w^\frac{d-3}{2}{\dd w} = \infty.
\]
The solution to this is to adjust the core measure $\rho_\varepsilon$ as follows. Since $\mathsf{D}$ is bounded, we can choose an $R>0$ sufficiently large that, for all $x\in\mathsf{D}$ in  $\mathbb{S}^{d-2}(0, R): = \{y\in\mathbb{H}^{d-1}: |y|\leq R\}$ strictly contains $\mathsf{D}$.  Denote $\mathbb{S}^{d-2}_\varepsilon(0, R) = \{x\in \mathbb{R}^d: |\hat{x}-x|\leq \varepsilon\}$, where $\hat{x}$ is the orthogonal projection of $x$ on to $\mathbb{H}^{d-1}$.  Suppose we now make a slight adjustment and replace $\rho_\varepsilon$ by 
\[
\rho_\varepsilon (\dd y) = \frac{k_{1, d, R}}{|\log \varepsilon|}((v,y)+\varepsilon)^{-\alpha/2}(\varepsilon-(v,y))^{-\alpha/2} \mathbf{1}_{(y\in \mathbb{S}_\varepsilon^{d-2}(0, R))}\ell_d(\dd y),
\]
for an appropriate choice of $k_{1, d,R}$. We may now continue the argument from \eqref{pickup} with the calculation 
\begin{equation}
|\log\varepsilon|U\rho_\varepsilon^{(1)} (x)= \frac{ k_{\alpha,d, R}  \pi^{(d-2)/2}}{\Gamma((d-2)/2)}  \int_{-\varepsilon}^{\varepsilon}  (u+\varepsilon)^{-1/2} (\varepsilon-u)^{-1/2}\dd u \int_0^R \Big(w+{|(v,x)-u|^2}\Big)^{\frac{1-d}{2}} w^\frac{d-3}{2}{\dd w}.
\label{breakinto2}
\end{equation}
Let us now define 
\[
I({R}, \varepsilon, x) = \int_0^R \Big(w+{|(v,x)-u|^2}\Big)^{\frac{\alpha-d}{2}} w^\frac{d-3}{2}{\dd w} 
\]
ensuring that $\varepsilon$ is small enough that  $\varepsilon\ll R$. 
\smallskip

Appealing to \eqref{poly}, 
\begin{align*}
I_1({R}, \varepsilon, x)&= \frac{ k_{\alpha,d, R}  \pi^{(d-2)/2}}{\Gamma((d-2)/2)}  \int_{-\varepsilon}^{\varepsilon}  (u+\varepsilon)^{-1/2} (\varepsilon-u)^{-1/2}\dd u \int_0^{R} \Big(w+{|(v,x)-u|^2}\Big)^{\frac{1-d}{2}} w^\frac{d-3}{2}{\dd w}\\
&=\frac{ k_{\alpha,d, R}  \pi^{(d-2)/2}}{\Gamma((d-2)/2)}  \int_{-\varepsilon}^{\varepsilon}  (\varepsilon^2 - u^2)^{-1/2} |(v,x)-u|^{1-d}\dd u \int_0^{R} \Big(\frac{w}{|(v,x)-u|^2}+{1}\Big)^{\frac{1-d}{2}} w^\frac{d-3}{2}{\dd w}\\
&=\frac{ k_{\alpha,d, R}  \pi^{(d-2)/2}}{\Gamma((d-2)/2)}  \int_{-\varepsilon}^{\varepsilon}  (\varepsilon^2 - u^2)^{-1/2} |(v,x)-u|^{1-d}\notag\\
&\hspace{4cm}
\frac{{R}^{(d-1)/2}}{(d-1)/2}
 {_2}F_1\left(\frac{d-1}{2}, \frac{d-1}{2}; \frac{d+1}{2}; -\frac{{R}}{|(v,x)-u|^2}\right)
\dd u, \\
\end{align*}
where we have used the identity in \eqref{third}. One of the many identities for hypergeometric functions, see \cite{webpage2}, offers us the growth condition, for $c-a\in\mathbb{N}$, as $|z|\to\infty$,
\begin{align}
{_2}F_1(a, a; c; z) \sim
\frac{\Gamma(c)(\log(-z)-\psi(c-a)-\psi(a)-2\gamma)(-z)^{-a}}{\Gamma(a)(c-a-1)!} + \frac{\Gamma(c)2(-z)^{-c}}{\Gamma(a)^2((c-a)!)^2}
\end{align}
where $\gamma$ is an unimportant constant and $\psi(z) = \Gamma'(z)/\Gamma(z)$ is the di-gamma function. In the spirit of previous calculations, we can thus find to leading order, uniformly over $x\in \mathsf{D}_\varepsilon$, 
\begin{equation}
U\rho_\varepsilon^{(1)} (x)\sim 2\frac{ \pi^{d/2}k_{\alpha,d, R}  }{\Gamma((d-2)/2)},   
\label{Tsogiigotfactor2different}
\end{equation}
which remarkably does not depend on $R$. This means we should choose the constant 
\[
k_{\alpha,d, R}  = \frac{\Gamma((d-2)/2)}{2\pi^{d/2}} 
\]
 for this asymptotic to serve our purpose. 
\smallskip

At this point in the proof, recalling the fundamental decomposition \eqref{differencee3}, it is worth bringing in the term  $U\mu^{(2)}_\varepsilon$ and noting that one can compute with relatively coarse estimates that 
\[
\sup_{x\in \mathsf{D}_\varepsilon}\left|U\rho^{(2)}_\varepsilon(x)\right|\leq \frac{C}{ |\log\varepsilon|}
\]
for some unimportant constant $C>0$. Together with \eqref{Tsogiigotfactor2different}, in a calculation similar to \eqref{somethingsimilarbelow} we can put the pieces together to get the  asymptotic, for $x\not\in\mathsf{D}$ and $\varepsilon$ sufficiently small, 
\begin{align}
\lim_{\varepsilon\to0}\ |\log\varepsilon|\ \mathbb{P}_x(\tau_{\mathsf{D}_\varepsilon}<\infty) 
&= \lim_{\varepsilon\to0} \ |\log\varepsilon|\ U\rho_\varepsilon^\mathsf{D}(x) \notag\\
&= \lim_{\varepsilon\to0} \frac{\Gamma((d-2)/2)}{2\pi^{d/2}} \int_{\mathsf{D}_\varepsilon}|x-y|^{1 -d}(\varepsilon^2-(v,y)^2)^{-1/2} \ell_d(\dd y)\notag\\
&= \lim_{\varepsilon\to0} \frac{\Gamma((d-2)/2)}{\pi^{d/2}} \int_{-\varepsilon}^\varepsilon(\varepsilon^2-u^2)^{-1/2} \dd u \int_{\mathsf{D}(u)}|x-\hat{y}|^{1-d}\ell_{d-1}(\dd \hat y)\notag\\
&=\frac{\Gamma((d-2)/2)}{\pi^{(d-2)/2}}M_\mathsf{D}(x),
\end{align}
where    $\mathsf{D} (u):= \{y \in \mathbb{H}^{d-1} (u)\colon \hat{y}\in \mathsf{D} \}$. The proof is complete.
\qed


\appendix
\renewcommand{\theequation}{A.\arabic{equation}}
\setcounter{equation}{0}
\section*{Appendix: Hypergeometric identities}

We work with the standard definition for the hypergeometric function,
\[
\,{_2}F_1(a,b,c; z) = \sum_{n = 0}^\infty\frac{(a)_n(b)_n}{(c)_n} \frac{z^n}{n!}, \qquad |z|<1.
\]
Of the many identities for hypergeometric functions, we need the following:
\begin{align}
    {_2}F_1 (a,b,c;z) &= \frac{\Gamma(c)\Gamma(a+b-c)}{\Gamma(a)\Gamma(b)} (1-z)^{c-a-b} 
      {_2}F_1 (c-a,c-b,1+c-a-b; 1-z)
    \notag \\
    &\hspace{2cm}+ \frac{\Gamma(c)\Gamma(c-a-b)}{\Gamma(c-a)\Gamma(c-b)}
      {_2}F_1 (a,b,a+b-c+1; 1-z)
  \label{bigF}
\end{align}

for $c-a-b \notin \mathbb{Z},$ and hence thanks to continuity, 
\begin{align}
&\lim_{\varepsilon\to0} \sup_{r\in[1-\varepsilon,1]} \Bigg|  {_2}F_1 \Big(\frac{d-\alpha}{2}, 1-\frac{\alpha}{2}; \frac{d}{2}; r^2\Big)\notag\\
&\hspace{3cm} - 
    \frac{\Gamma({d}/{2})\Gamma(1-\alpha)}{\Gamma((d-\alpha)/2) \Gamma((2-{\alpha})/{2})} (1-r^2)^{\alpha-1}  -\frac{\Gamma({d}/{2})\Gamma(\alpha-1)}{\Gamma({\alpha}/{2})\Gamma((d+\alpha-2)/2)} \Bigg|=0  \label{f1}.
\end{align}

We will need to apply a similar identity to \eqref{bigF} but for the setting that $c-a-b = 0$, which violates the assumption behind \eqref{bigF}. In that case, we need to appeal to the formula 
\begin{eqnarray}
    {_2}F_1 (a,b,a+b,z) &=& \frac{\Gamma(a+b)}{\Gamma(a)\Gamma(b)} \Big( \sum_{k=0}^\infty \frac{(a)_k (b)_k}{(k!)^2} (2\psi(k+1)-\psi(a+k)-\psi(b+k)) (1-z)^k \notag \\
    &-& \log(1-z)\, {_2}F_1 (a,b,1,1-z)\Big).
\end{eqnarray}
for $|1-z|<1$ where the di-gamma function $\psi(z) = \Gamma'(z)/\Gamma(z)$ is defined for all $z \ne -n, n \in \mathbb{N}$. 

\smallskip
Again, thanks to continuity, we can write
\begin{align}
&\lim_{\varepsilon\to0} \sup_{r\in[1-\varepsilon,1]} \Bigg|  {_2}F_1 \Big(\frac{d-1}{2}, \frac{1}{2}; \frac{d}{2}; r^2\Big)
    +\frac{\Gamma({d}/{2})}{\Gamma((d-1)/2) \Gamma(1/{2})} \log(1-r^2)\notag\\
&\hspace{6cm}    -\frac{2\Gamma({d}/{2}) (\psi(1) - \psi((d-1)/2) -\psi(1/{2}))}{\Gamma((d-1)/2) \Gamma(1/{2})}  \Bigg|=0,  \label{f12}
\end{align}

\medskip

A second identity that is needed is the following combination formula, which states that for any $|z|<1$, we have
\begin{align}
    {_2}F_1(a,b;c;z) &= \frac{\Gamma(b-a)\Gamma(c)}{\Gamma(c-a)\Gamma(b)} (-z)^{-a} {_2}F_1\left(a,a-c+1;a-b+1;\frac{1}{z}\right) \notag \\
    &\hspace{2cm}+ \frac{\Gamma(a-b)\Gamma(c)}{\Gamma(c-b)\Gamma(a)} (-z)^{-b} {_2}F_1\left(b-c+1,b;-a+b+1;\frac{1}{z}\right) 
    \label{linearcombo}
\end{align}
which can be found, for example at \cite{webpage}. In the main body of the text, we use this identity for the setting that $a = \alpha/2$, $b = \alpha$ and $c = 1+\alpha/2$. This gives us the identity 
\begin{align}
    {_2}F_1\left(\frac{\alpha}{2},\alpha;1+\frac{\alpha}{2};z\right) &= \frac{\Gamma(\alpha/2)\Gamma((2+\alpha)/2)}{ \Gamma(\alpha)} (-z)^{-\alpha/2} {_2}F_1\left(\alpha/2,0;1-\alpha/2;\frac{1}{z}\right) \notag \\
    &\hspace{2cm}+ \frac{\Gamma(-\alpha/2)\Gamma((2+\alpha)/2)}{\Gamma((2-\alpha)/2)\Gamma(\alpha/2)} (-z)^{-\alpha} {_2}F_1\left(\alpha/2,\alpha;1+\alpha/2;\frac{1}{z}\right) \notag\\
    &= \frac{\Gamma(\alpha/2)\Gamma((2+\alpha)/2)}{ \Gamma(\alpha)} (-z)^{-\alpha/2} \notag \\
    &\hspace{2cm}- (-z)^{-\alpha} {_2}F_1\left(\alpha/2,\alpha;1+\alpha/2;\frac{1}{z}\right) ,
        \notag
\end{align}
where we have used the recursion formula for gamma functions twice in the final equality. This allows us to come to rest at the following useful identity 
\begin{align}
  (-{z})^{-\alpha/2} {_2}F_1\left(\alpha/2,\alpha;1+\alpha/2;\frac{1}{z}\right) + (-z)^{\alpha/2}  {_2}F_1\left(\frac{\alpha}{2},\alpha;1+\frac{\alpha}{2};z\right) 
    &= \frac{\Gamma(\alpha/2)\Gamma((2+\alpha)/2)}{ \Gamma(\alpha)}.
            \label{linearcombo2}
        \end{align}

We are also interested in integral formulae, for which the hypergeometric function is used to evaluate an integral. The first  is aversion of formula 3.665(2) in \cite{table} which states that,  for any $0<|a|<r$ and $\nu>0$,  as
\begin{equation}
     \int_0^{\pi} \frac{\sin^{d-2} \phi }{(a^2+2a r \cos \phi + r^2)^\nu} \dd\phi=  \frac{1}{r^{2\nu}} B\Big(\frac{d-1}{2}, \frac{1}{2}\Big) \,{_2}F_1 \Big(\nu, \nu-\frac{d}{2}+1; \frac{d}{2}; \frac{a^2}{r^2}\Big) ,
     \label{3.665}
\end{equation}
where $B(a,b) = \Gamma(a)\Gamma(b)/\Gamma(a+b)$ is the Beta function.
The second is  formula 3.197.8 in \cite{table}, which states that, for $Re(\mu)>0, Re(\nu)>0$ and $|\arg({u}/{\beta})|<\pi$,  
\begin{equation}
    \int_0^u x^{\nu-1}(u-x)^{\mu-1}  (x+\beta)^\lambda \dd x = \beta^\lambda u^{\mu+\nu-1} B(\mu,\nu) {_2}F_1 \left(-\lambda,\nu;\mu+\nu;-\frac{u}{\beta}\right).
    \label{poly}
\end{equation}
    The third is 3.194.1 of \cite{table} and states that, for $|\arg(1+\beta u)|>\pi$ and $Re(\mu)>0$, $Re(\nu)>0$,
    \begin{equation}
    \int_0^u x^{\mu -1} (1+\beta x)^{-\nu}\dd x = \frac{u^\mu}{\mu}{_2}F_1(\nu, \nu-\mu; 1+\mu; -\beta u),
    \label{third}
    \end{equation}
where ${_2}F_1$ in the above identity is understood as its analytic extension in the event that $|\beta u|>1$.

\section*{Acknowledgements}
TS acknowledges support from a Schlumberger Faculty of the Future award. SP acknowledges support from the Royal Society as a Newton International Fellow Alumnus (AL201023) and UNAM-DGAPA-PAPIIT grant no. IA103220.

\end{document}